\title{Dedekind Sums with Even Denominators}
\author{Michael Kural}
\address{Department of Mathematics, Massachusetts Institute of Technology}
\email{mkural@mit.edu}
\begin{document}

\begin{abstract}
Let $S(a,b)$ denote the normalized Dedekind sum. We study the range of possible values for $S(a,b)=\frac{k}{q}$ with $\gcd(k,q)=1$. Girstmair proved local restrictions on $k$ depending on $q\pmod{12}$ and whether $q$ is a square and conjectured that these are the only restrictions possible. We verify the conjecture in the cases $q$ even, $q$ a square divisible by $3$ or $5$, and $2\le q\le 200$ (the latter by computer), and provide progress towards a general approach.
\end{abstract}
\maketitle

\section{Introduction}

\begin{defn}
For coprime integers $a$ and $b$ with $b>0$,  the \textit{Dedekind sum} $s(a,b)$ is defined as
\[
s(a,b) = \sum_{k=1}^{b} \left(\!\!\left(\frac{k}{b}\right)\!\!\right)\left(\!\!\left(\frac{ak}{b}\right)\!\!\right),
\]
where $ {\displaystyle (\!(\,)\!):\mathbb {R} \rightarrow \mathbb {R} }$ denotes the \textit{sawtooth function}, defined by
\[
(\!(x)\!) = \begin{cases}
\{x\}-\frac{1}{2}&x \not \in \Z\\
0 &x \in \Z.
\end{cases}
\]
We will primarily work with the \textit{normalized Dedekind sum} $S(a,b)$, defined by 
\[
S(a,b)=12s(a,b),
\] 
which will make computation more convenient.

\end{defn}

The definition of the Dedekind sum is motivated by its use in the transformation law of the Dedekind eta function (\cite[p. 52]{Apostol76}). Dedekind sums have been studied in a variety of contexts, including in applications to algebraic geometry, lattice point enumeration, and the study of modular forms (\cite{Urzua07,Rademacher72,Apostol76,Bruggeman94}). The distribution of possible values of Dedekind sums has also been considered extensively (\cite{Vardi93,Bruggeman94,Hickerson97,Myerson98,Girstmair15,GirstmairIntegers2017,GirstmairLargest2017,GirstmairEqual2016}). 

It was noted by Rademacher and Grosswald in \cite[p. 28]{Rademacher72} that the range of values of $S(a,b)$ is unknown, which is our central question. Hickerson (\cite{Hickerson97}) proved that this range is dense in $\R$, and Girstmair (\cite{Girstmair15}) found that each rational number $r \in [0,1)$ occurs as the fractional part of a Dedekind sum $S(a,b)$. Furthermore, Girstmair (\cite{GirstmairNovember2017}) proved that each value in this range occurs as a Dedekind sum infinitely many times in a nontrivial sense. Recently, Girstmair (\cite{Girstmair17}) classified the denominator of a Dedekind sum in terms of $a$ and $b$: if $S(a,b) = \frac{k}{q}$ with $\gcd(k,q)=1$, then $q = \frac{b}{\gcd(a^2+1,b)}$. Girstmair also conjectured that for a fixed integer $q\ge 2$, the set of possible $k$ coprime with $q$ such that $\frac{k}{q} = S(a,b)$ for some $a,b$ are exactly the integers $k$ for which 
\begin{itemize}
\item If $3\nmid q$, then $3\mid k$.
\item If $2\nmid q$, then
\[
k \equiv \begin{cases}
2\pmod{4} & q\equiv 3 \pmod{4}\\
0 \pmod{8} & q\text{ is a square}\\
0\pmod{4}&\text{else.}
\end{cases}
\]
\end{itemize}
and showed that these conditions are indeed necessary. Finally, Girstmair proved that if $k\equiv k' \pmod{q(q^2-1)}$, then $\frac{k}{q}$ is the value of a normalized Dedekind sum if and only if $\frac{k'}{q}$ is the value of a normalized Dedekind sum, effectively reducing the problem to a finite existence problem $\pmod{q(q^2-1)}$ for each $q$. In particular, this reduction allowed Girstmair to verify the conjecture for all $q\le 60$ by computer.

In Section \ref{sec:reduce} of the present paper, we establish a decomposition $qS(a',b') = \lambda(a,t,t^{*}) + \Delta(at^{*}+j)$ for certain numerators of normalized Dedekind sums and analyze $\lambda$ and $\Delta$ $\pmod{q}$ and $\pmod{q^2-1}$ to reduce Girstmair's conjecture to an existence problem for $\lambda \pmod{q^2-1}$. In Section \ref{sec:partial}, we specialize $\lambda\pmod{q^2-1}$ to a function $f(a)$, which we then split into a linear and a periodic part based on a generalization of Rademacher's three-term relation used by Girstmair. If the slope of the linear part is small enough (in particular if $\gcd(a,q)$ is small enough), this observation is enough to prove Girstmair's conjecture. In particular, we prove the conjecture holds for all $q$ even and all perfect squares $q$ which are divisible by $3$ or $5$. We also use computer verification and our first observation to prove the conjecture for all $2\le q\le 200$.

As noted in \cite{Girstmair17}, the case $q=1$ has been resolved completely, so for the remainder of the paper we assume $q\ge 2$.

\section{Background}

Note that given a fixed $b$, the normalized Dedekind sum only depends on the residue class of $a\pmod{b}$.

A classical fact about Dedekind sums reveals unexpected symmetry:

\begin{lemma}[Reciprocity Law]
\label{recip}
If $a$ and $b$ are coprime positive integers, then
\[
S(a,b) +S(b,a) = \frac{a}{b}+\frac{b}{a}+\frac{1}{ab}-3.
\]
\end{lemma}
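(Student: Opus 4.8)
The plan is to route the identity through the classical finite cotangent representation of the Dedekind sum and then extract reciprocity from a residue computation; this is the Rademacher-style argument, and while there are alternatives (a lattice-point count in a triangle, or the transformation law of $\eta$), the contour route is the cleanest to present self-contained. The first target is the formula
\[
s(a,b) \;=\; \frac{1}{4b}\sum_{l=1}^{b-1}\cot\!\left(\frac{\pi l}{b}\right)\cot\!\left(\frac{\pi a l}{b}\right),
\]
which rewrites the bilinear-in-sawtooth quantity $s(a,b)$ as a single trigonometric sum; from two such formulas one reads off $s(a,b)+s(b,a)$ all at once.

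To obtain the cotangent formula I would run finite Fourier analysis on $\Z/b\Z$. Computing the discrete Fourier coefficients of $n\mapsto (\!(n/b)\!)$ — the only nonroutine input being the standard evaluation of $\sum_n n\,\zeta^{-jn}$ over a complete residue system, with $\zeta=e^{2\pi i/b}$ — yields $(\!(n/b)\!)=\frac{i}{2b}\sum_{j=1}^{b-1}\cot(\pi j/b)\,\zeta^{jn}$. Substituting this expansion for both sawtooth factors in the definition of $s(a,b)$ (harmlessly including the $k=0$ term), summing over $k$ by orthogonality of the additive characters, and using $\gcd(a,b)=1$ to see that the surviving pairs $(j,l)$ are exactly those with $j\equiv -al\pmod b$, the oddness and $\pi$-periodicity of $\cot$ collapse everything to the displayed identity.

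For reciprocity I would integrate $F(z)=\cot(\pi a z)\cot(\pi b z)\cot(\pi z)$ around the boundary of a tall rectangle with vertical sides $\Re z=-\delta$, $\Re z=1-\delta$ and horizontal sides $\Im z=\pm Y$, with $\delta\in(0,1/\max(a,b))$ chosen generically so the contour avoids every pole. Since $F$ has period $1$, the two vertical contributions cancel exactly; since $\cot(\pi w)\to\mp i$ as $\Im w\to\pm\infty$, exponentially and uniformly in $\Re w$, the two horizontal contributions each tend to a constant and the whole boundary integral tends to $-2i$ as $Y\to\infty$. The enclosed poles are: $z=0$, a triple pole whose residue comes from the Laurent expansion $\cot w=w^{-1}-w/3-\cdots$ and equals $-\frac{a^2+b^2+1}{3\pi ab}$; the simple poles $z=l/b$ for $1\le l\le b-1$, contributed only by $\cot(\pi b z)$, whose residues sum via the cotangent formula to $\frac{4}{\pi}s(a,b)$; and symmetrically the simple poles $z=l/a$ for $1\le l\le a-1$, summing to $\frac{4}{\pi}s(b,a)$ (the two families are disjoint because $\gcd(a,b)=1$). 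The residue theorem gives $-2i=2\pi i\left(\frac{4}{\pi}(s(a,b)+s(b,a))-\frac{a^2+b^2+1}{3\pi ab}\right)$; solving for $s(a,b)+s(b,a)$ and multiplying by $12$ produces $S(a,b)+S(b,a)=\frac{a}{b}+\frac{b}{a}+\frac{1}{ab}-3$.

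The routine parts here are the Fourier-coefficient evaluation and the Laurent-series bookkeeping at $z=0$; the step I expect to require the most care is the contour argument — checking that $\delta$ can be chosen off the countable set of pole abscissas $\frac1a\Z\cup\frac1b\Z$, that the error terms on the horizontal sides genuinely vanish as $Y\to\infty$, and that period-$1$ invariance cancels the vertical sides precisely. This is entirely standard, and one could instead simply cite the treatment in Rademacher–Grosswald, but I would carry it out for self-containedness.
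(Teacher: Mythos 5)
Your proof is correct: the finite Fourier expansion of the sawtooth, the resulting cotangent formula for $s(a,b)$, the residue at the triple pole, and the limit $-2i$ of the boundary integral all check out, and the final arithmetic does yield $S(a,b)+S(b,a)=\frac{a}{b}+\frac{b}{a}+\frac{1}{ab}-3$. The paper offers no argument of its own here --- it simply cites Rademacher--Grosswald --- and what you have written out is exactly the standard contour-integration proof from that reference, so this is a self-contained version of the same approach.
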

\begin{proof}
See, for example, \cite[p. 27]{Rademacher72}.
\end{proof}
The reciprocity law is crucial to the study of Dedekind sums. As an example, along with the fact that $S(a-nb,b) = S(a,b)$ for $n \in \Z$, the reciprocity law yields an alternative method for computing $S(a,b)$ by following the Euclidean algorithm.

Given coprime integers $a$ and $b$ with $b>0$, it is natural to ask what the denominator of $S(a,b)$ is. It follows from expansion of the original definition and some rearrangement (see, for example, \cite[p. 27]{Rademacher72}) that $bS(a,b) \in \Z$, and so the denominator of $S(a,b)$ when written as a reduced fraction is a divisor of $b$. In fact, a more exact statement holds:
\begin{thm}[Girstmair, \cite{Girstmair17}]
\label{denom}
Suppose $a$ and $b$ are coprime integers with $b>0$, and suppose we can write $S(a,b)$ as
\[
S(a,b) = \frac{k}{q}
\]
for $k,q \in \Z$, $q>0$, and $\gcd(k,q)=1$. Then
\[
q = \frac{b}{\gcd(b,a^2+1)}.
\]
In particular, given positive integers $b,q$ and an integer $a$ with $\gcd(a,b)=1$, $S(a,b)$ has the form $\frac{k}{q}$ for some $k\in \Z$ with $\gcd(k,q)=1$ if and only if
\[
b = \frac{q(a^2+1)}{t}
\]
for some positive integer $t$ with $\gcd(t,q)=1$.
\end{thm}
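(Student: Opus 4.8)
The plan is to pin down the exact divisor of $b$ that cancels in $S(a,b)$ by computing the integer $N:=b\,S(a,b)$ modulo $b$. Both sides of the first assertion depend only on $a$ modulo $b$: the left side because $S(a,b)$ does, and the right side because $(a+b)^2+1\equiv a^2+1\pmod{b}$. So in proving it I may assume $a>0$, which places us in the hypotheses of Lemma \ref{recip}. Multiplying that identity by $ab$ yields
\[
aN + b\bigl(a\,S(b,a)\bigr) = a^2 + 1 + b^2 - 3ab ,
\]
where $a\,S(b,a)\in\Z$ by the same integrality fact that gives $b\,S(a,b)\in\Z$, applied with $a$ and $b$ interchanged. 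Every term here except $aN$ and $a^2+1$ is divisible by $b$, so
\[
aN \equiv a^2 + 1 \pmod{b},
\]
and since $\gcd(a,b)=1$ this is equivalent to $N\equiv a+\bar a\pmod{b}$, with $\bar a$ the inverse of $a$ modulo $b$.

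The next step is to upgrade this congruence to the valuation identity $\gcd(N,b)=\gcd(a^2+1,b)$. Fix a prime $p\mid b$ and set $e=v_p(b)$; the congruence gives $p^e\mid aN-(a^2+1)$, while $p\nmid a$ gives $v_p(aN)=v_p(N)$. If $v_p(a^2+1)\ge e$ then $p^e\mid aN$, hence $p^e\mid N$. If instead $v_p(a^2+1)<e$ but $v_p(N)\ne v_p(a^2+1)$, then $v_p\bigl(aN-(a^2+1)\bigr)=\min\{v_p(N),v_p(a^2+1)\}<e$, contradicting $p^e\mid aN-(a^2+1)$; hence $v_p(N)=v_p(a^2+1)$. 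In both cases $\min\{v_p(N),e\}=\min\{v_p(a^2+1),e\}$, which is precisely $\gcd(N,b)=\gcd(a^2+1,b)$. Since $S(a,b)=N/b$, reducing the fraction shows its denominator equals $b/\gcd(N,b)=b/\gcd(b,a^2+1)=q$, which proves the first assertion.

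For the "in particular" clause I would convert the equality $q=b/\gcd(b,a^2+1)$ into the stated shape by elementary manipulation. Put $d=\gcd(b,a^2+1)$. If $q=b/d$, take $t=(a^2+1)/d\in\Z_{>0}$; then $b=qd=q(a^2+1)/t$, and $d=\gcd(qd,dt)=d\gcd(q,t)$ forces $\gcd(q,t)=1$. Conversely, if $b=q(a^2+1)/t$ with $t$ a positive integer coprime to $q$, then $t\mid q(a^2+1)$ and hence $t\mid a^2+1$; writing $s=(a^2+1)/t$ gives $b=qs$ and $\gcd(b,a^2+1)=\gcd(qs,ts)=s$, so $b/\gcd(b,a^2+1)=q$ and the first part applies.

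The only substantive input is the congruence $aN\equiv a^2+1\pmod{b}$, which falls out of reciprocity essentially for free once one remembers that $b\,S(a,b)$ and $a\,S(b,a)$ are integers; everything after that is formal. The step I would be most careful about is the valuation bookkeeping — in particular, keeping the two regimes $v_p(a^2+1)\ge e$ and $v_p(a^2+1)<e$ straight, since the congruence pins $v_p(N)$ exactly only in the second — and checking at the outset that the reduction to $a>0$ is legitimate (the degenerate cases $b=1$ and $N=0$, i.e. $S(a,b)=0$, cause no difficulty).
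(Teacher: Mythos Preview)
Your argument is correct. Note, however, that the paper does not actually prove Theorem~\ref{denom}: it is quoted from Girstmair's paper \cite{Girstmair17} and used as input. So there is no ``paper's own proof'' to compare against here.

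That said, your route is clean and self-contained from the tools the present paper already has on hand. The key observation --- that multiplying the reciprocity law by $ab$ gives $aN\equiv a^2+1\pmod{b}$ with $N=bS(a,b)\in\Z$ --- immediately forces $\gcd(N,b)=\gcd(a^2+1,b)$ by the prime-by-prime case split you wrote, and the rest is bookkeeping. The reduction to $a>0$ is legitimate for exactly the reason you give, and the edge cases $b=1$ and $N=0$ are indeed harmless (in the latter, your congruence already gives $b\mid a^2+1$, so $q=1$). The ``in particular'' paragraph is also fine; the one line worth stating a hair more explicitly is that in the converse direction $\gcd(qs,ts)=s\gcd(q,t)=s$ uses $\gcd(q,t)=1$, which you have assumed.
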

Furthermore, Girstmair proved that the problem of classifying the set of $k\in \Z$ such that $\gcd(k,q)=1$ and $\frac{k}{q}$ is the value of a normalized Dedekind sum can be reduced to classification $\pmod{q(q^2-1)}$. More precisely, he proved the following.

\begin{thm}[Girstmair, \cite{Girstmair17}]
\label{finitemod}
Let $k$ and $q$ be coprime integers with $q\ge 2$. If $k' \in \Z$ and 
\[
k' \equiv k \pmod{q(q^2-1)},
\]
then $\frac{k}{q}$ is the value of a normalized Dedekind sum if and only if $\frac{k'}{q}$ is the value of a normalized Dedekind sum.
\end{thm}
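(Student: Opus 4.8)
The plan is to combine Girstmair's denominator formula (Theorem~\ref{denom}) with repeated use of the reciprocity law (Lemma~\ref{recip}) in order to exhibit the numerators of Dedekind sums with a fixed denominator $q$ as a finite union of arithmetic progressions whose common differences divide $q(q^2-1)$. By Theorem~\ref{denom}, the set of numerators $k$ with $\gcd(k,q)=1$ and $k/q$ a normalized Dedekind sum is exactly
\[
\mathcal K_q \;=\; \bigcup_{\substack{t\ge 1\\ \gcd(t,q)=1}} \mathcal F_t, \qquad
\mathcal F_t \;=\; \Bigl\{\, qS\!\Bigl(a,\tfrac{q(a^2+1)}{t}\Bigr)\;:\; a\in\Z,\ \gcd(a,q)=1,\ t\mid a^2+1 \,\Bigr\},
\]
using that $\gcd(a,a^2+1)=1$ (so $\gcd(a,b)=1\iff\gcd(a,q)=1$) and that $\gcd(k,q)=1$ is automatic. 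Since a union of sets each of which is a union of residue classes modulo $q(q^2-1)$ is again such a union, it suffices to prove that every $\mathcal F_t$ is a union of residue classes modulo $q(q^2-1)$.

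Fix $t$ (we may assume $-1$ is a square modulo $t$, else $\mathcal F_t=\varnothing$), and let $M$ be a suitable multiple of $\operatorname{lcm}(t,q)$, chosen so that the conditions $\gcd(a,q)=1$ and $t\mid a^2+1$ are preserved by $a\mapsto a+M$. Then it is enough to prove the single identity
\[
qS\!\Bigl(a+M,\;\tfrac{q((a+M)^2+1)}{t}\Bigr) \;=\; qS\!\Bigl(a,\;\tfrac{q(a^2+1)}{t}\Bigr) \;+\; q(q^2-1)
\]
for all admissible $a$: it shows that, along each of the finitely many admissible residue classes of $a$ modulo $M$, the numerator runs over a full coset of $q(q^2-1)\Z$ (note that $a$ is allowed to be negative, since $S(a,b)$ is defined for every integer $a$ with $b>0$), whence $\mathcal F_t$ is a finite union of such cosets, i.e.\ a union of residue classes modulo $q(q^2-1)$.

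To fix the mechanism I would first treat $t=1$. Applying Lemma~\ref{recip} to $S(a,q(a^2+1))$ and using $q(a^2+1)\equiv q\pmod a$, then applying Lemma~\ref{recip} again to $S(q,a)$ and using $S(a,q)=S(a\bmod q,\,q)$, the rational terms telescope and one is left with the exact formula
\[
qS\!\left(a,\,q(a^2+1)\right) \;=\; qS(a\bmod q,\,q) \;+\; (q^2-1)\,a ,
\]
so that, taking $M=q$, the identity displayed above is immediate and $\mathcal F_1$ is a union of residue classes modulo $q(q^2-1)$. For general $t$ the same double application of reciprocity is available, but the first step only yields $\tfrac{q(a^2+1)}{t}\equiv q\overline t\pmod a$, where $\overline t$ is the inverse of $t$ modulo $a$, so one must now control $S(q\overline t,a)$ as $a$ varies. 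I would strip off the factor $q$ via the classical inversion symmetry $S(q\overline t,a)=S(\overline q\,t,a)$ (with $q\overline q\equiv 1\pmod a$), then apply a three-term relation — the generalization of Rademacher's relation used in the later sections — to rewrite $S(\overline q\,t,a)$ as a combination of Dedekind sums whose first arguments are pinned modulo $q$, and finally invoke the standard congruences for Dedekind sums modulo the denominator (\cite{Rademacher72}) together with $\gcd(q,q^2-1)=1$ to read off the desired identity.

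The main obstacle is exactly this general-$t$ computation: one must check that after the reciprocity and three-term manipulations the contributions of the $a$-dependent inverses $\overline t$ and $\overline q$ reassemble into a function of $a$ that is affine along progressions modulo $\operatorname{lcm}(t,q)$ and increases by precisely $q(q^2-1)$ when $a$ advances by one such period — in particular, that the constraint $t\mid a^2+1$ together with coprimality to $q$ forces no refinement of the modulus $q(q^2-1)$. Carefully tracking which Dedekind-sum denominators appear along the way, and reducing each of them modulo $q$ and modulo $q^2-1$ separately, is the delicate bookkeeping on which the exact modulus $q(q^2-1)$ depends.
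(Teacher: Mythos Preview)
Your plan is on the right track and is in fact aiming at the same mechanism the paper attributes to Girstmair, but you stop short of the one identity that collapses the general-$t$ case. The paper does not reprove Theorem~\ref{finitemod} itself, but it records the tool Girstmair used (Lemma~\ref{girstmairidentity}): for $b=q(a^2+1)/t$,
\[
S(a,b)=\frac{(q^2-1)a}{tq}-S(aq,t)+S(at^{*},q),
\]
obtained in a single application of the generalized three-term relation. From this, replacing $a$ by $a+tq$ leaves both Dedekind-sum terms unchanged (their first arguments are fixed modulo $t$ and modulo $q$, respectively) and shifts the rational term by exactly $q^2-1$; hence $qS(a',b')=qS(a,b)+q(q^2-1)$ with $M=tq=\operatorname{lcm}(t,q)$, and the theorem follows at once. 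Your $t=1$ computation is precisely the special case of this identity.

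For general $t$ you instead apply reciprocity first, landing on $S(q\overline t,a)$ with an $a$-dependent inverse $\overline t$, and then propose to unwind via the inversion symmetry and a further three-term step. That is not wrong in principle, but it is a detour: the $a$-dependence of $\overline t$ and $\overline q$ is exactly the ``delicate bookkeeping'' you yourself flag as the main obstacle, and you have not carried it out. Applying the three-term relation directly to $S(a,b)$, with the auxiliary parameters chosen so that the resulting Dedekind sums have \emph{fixed} denominators $t$ and $q$ rather than the moving denominator $a$, removes that dependence entirely and yields the displayed identity with no residual computation. So the gap in your proposal is not conceptual but executional: you are circling the key identity instead of stating and proving it.
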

Thus for each fixed value of $q$, the question of determining the set of normalized Dedekind sums with denominator $q$ is reduced to determining the finite set of residue classes $\pmod{q(q^2-1)}$ which represent the possible numerators. In fact, Girstmair provides necessary conditions on the numerators of such normalized Dedekind sums and conjectures that these are the only restrictions on the set of normalized Dedekind sums with a given denominator.
\begin{thm}[Girstmair, \cite{Girstmair17}]
\label{qgood}
Suppose for integers $a,b,k,q$ with $b\ge 1,q\ge 2,$ and $\gcd(a,b)=\gcd(k,q)=1$, it holds that $S(a,b) = \frac{k}{q}$. Then
\begin{itemize}
\item If $3\nmid q$, then $3\mid k$.
\item If $2\nmid q$, then
\[
k\equiv\begin{cases}
2\pmod{4} & q\equiv 3 \pmod{4}\\
0\pmod{8} & q \text{ is a square}\\
0 \pmod{4} & \text{else.}
\end{cases}
\]
\end{itemize}
\end{thm}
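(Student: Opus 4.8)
The plan is to translate each divisibility assertion about the numerator $k=qS(a,b)=12q\,s(a,b)$ into a statement about the $2$- and $3$-adic behaviour of the unnormalized sum $s(a,b)$, using Theorem~\ref{denom} as a dictionary between congruence conditions on $q$ and congruence conditions on $b$ and on the parity of $a$. The starting observation is that $a^{2}+1$ is never divisible by $3$ and has $2$-adic valuation $0$ or $1$ according as $a$ is even or odd. Writing $g=\gcd(b,a^{2}+1)$, so that $q=b/g$ by Theorem~\ref{denom}, this gives $v_{3}(q)=v_{3}(b)$ (hence $3\nmid q\iff 3\nmid b$), and a short case check shows that $q$ odd forces $v_{2}(b)\le 1$, i.e.\ $b$ is odd or $b\equiv 2\pmod 4$. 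The other elementary input is that every odd prime $p\mid g$ satisfies $a^{2}\equiv -1\pmod p$, hence $p\equiv 1\pmod 4$; this is what ultimately makes the answer sensitive to $q\bmod 4$ and to whether $q$ is a square.

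For the statement modulo $3$: the denominator of $s(a,b)$ divides $2b$ whenever $3\nmid b$ (a classical fact; cf.\ \cite{Rademacher72}). So when $3\nmid b$, writing $s(a,b)=u/v$ in lowest terms, $v_{3}(v)=0$, and therefore $v_{3}(k)=v_{3}(12qu)-v_{3}(v)=1+v_{3}(q)+v_{3}(u)\ge 1$, i.e.\ $3\mid k$.

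For the statements modulo powers of $2$ I would use the Rademacher $\Phi$-function: for $\gamma=\left(\begin{smallmatrix}a'&b'\\ b&a\end{smallmatrix}\right)\in\SL_{2}(\Z)$ with $a'a-b'b=1$ one has $S(a,b)=\tfrac{a'+a}{b}-\Phi(\gamma)$ with $\Phi(\gamma)\in\Z$. Dedekind's explicit formula for the $\eta$-multiplier (see, e.g., \cite[Ch.~3]{Apostol76}) expresses $\Phi(\gamma)\bmod 24$ through the entries of $\gamma$ and a Jacobi symbol; substituting into this identity and using $b^{2}\equiv 1\pmod 8$ for $b$ odd yields the clean congruence
\[
S(a,b)\equiv 3(b-1)+2\Bigl(1-\Bigl(\tfrac{a}{b}\Bigr)\Bigr)\pmod 8\qquad(b\text{ odd}),
\]
where $\bigl(\tfrac ab\bigr)$ is the Jacobi symbol, together with an analogous congruence (from the even-modulus case of the multiplier formula) when $b\equiv 2\pmod 4$. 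Since $q$ is odd, $v_{2}(k)=v_{2}(S(a,b))$, so it remains to read off this valuation. In the $b$-odd case, $b=qg$ with $g$ a product of prime powers whose primes are all $\equiv 1\pmod 4$, so $g\equiv 1\pmod 4$ and hence $q\equiv b\pmod 4$; moreover $\bigl(\tfrac ab\bigr)=\bigl(\tfrac aq\bigr)\bigl(\tfrac ag\bigr)$ with $\bigl(\tfrac aq\bigr)=1$ whenever $q$ is a perfect square, while $\bigl(\tfrac ag\bigr)$ and $g\bmod 8$ are jointly determined by the congruences $a^{2}\equiv -1$ modulo the prime power factors of $g$. Running the same analysis (with the evident modification absorbing the single factor of $2$ in $g$) when $b\equiv 2\pmod 4$, one reads off $k\equiv 2\pmod 4$ when $q\equiv 3\pmod 4$, $k\equiv 0\pmod 4$ when $q\equiv 1\pmod 4$, and the extra relation $k\equiv 0\pmod 8$ exactly when $q$ is a square.

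The step I expect to be the main obstacle is this last one. Extracting the precise value of $\Phi(\gamma)\bmod 24$ from the $\eta$-transformation formula requires care with the sign conventions of the (generalized) Jacobi symbol for non-reduced and negative matrix entries, and the even-modulus branch of that formula is noticeably messier. More fundamentally, the reason the hypothesis that $q$ is a square is responsible for the extra factor of $2$ is a quadratic-reciprocity bookkeeping argument driven by $a^{2}\equiv -1\pmod g$, which simultaneously controls $b\bmod 8$ and the value of $\bigl(\tfrac ab\bigr)$; making this close cleanly and uniformly across the two parity cases for $b$ is the delicate part.
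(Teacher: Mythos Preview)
The paper does not give its own proof of this theorem: it is stated with attribution to Girstmair \cite{Girstmair17} and used as a black box, so there is no argument in the paper to compare your proposal against.

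That said, your approach is the standard one and is consistent with what the paper invokes elsewhere. In the proof of Lemma~\ref{lambdamodq}'s successor the paper cites from \cite[p.~34]{Rademacher72} the congruence $bS(a,b)\equiv b+1-2\bigl(\tfrac{a}{b}\bigr)\pmod 8$ for $b$ odd, which is equivalent to the formula $S(a,b)\equiv 3(b-1)+2\bigl(1-\bigl(\tfrac{a}{b}\bigr)\bigr)\pmod 8$ you derive from the $\eta$-multiplier. Your $\bmod\ 3$ argument via the denominator of $s(a,b)$ is likewise classical. Where your proposal remains a sketch rather than a proof is precisely where you say: you do not actually carry out the quadratic-reciprocity bookkeeping that pins down $\bigl(\tfrac{a}{b}\bigr)$ and $b\bmod 8$ in terms of $q$ via $g=\gcd(b,a^2+1)$, and you do not write down or analyze the even-$b$ branch of the multiplier formula at all. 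Those are not mere details---the entire content of the ``$q$ a square'' clause lives there---so as it stands the proposal identifies the right tools but does not close the argument.
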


\begin{conj}[Girstmair, \cite{Girstmair17}]
\label{conj1}
For integers $k,q$ with $q\ge 2$ and $\gcd(k,q)=1$, there exist integers $a,b$ with $b\ge 1$ and $\gcd(a,b)=1$ such that
\[
S(a,b) = \frac{k}{q}
\]
if and only if the conditions of Theorem \ref{qgood} hold.
\end{conj}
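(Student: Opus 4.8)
The plan is to follow the two-stage reduction sketched in the introduction. First, by Theorem~\ref{finitemod} it suffices, for each fixed $q\ge 2$, to exhibit one pair $(a,b)$ realizing each residue class $k \bmod q(q^2-1)$ that satisfies the constraints of Theorem~\ref{qgood}. Since $\gcd(q,q^2-1)=1$, the Chinese Remainder Theorem splits this into two independent tasks: controlling $qS(a,b) \bmod q$, which is where the divisibility conditions of Theorem~\ref{qgood} live and where the target set is constrained, and controlling $qS(a,b) \bmod q^2-1$, which is unconstrained and therefore requires a genuine surjectivity statement.

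Second, I would parametrize candidates via Theorem~\ref{denom}: take $b = \frac{q(a^2+1)}{t}$ with $\gcd(t,q)=1$, so that $S(a,b)$ automatically has denominator $q$. Expanding $qS(a,b)$ with the reciprocity law (Lemma~\ref{recip}) and peeling off one Euclidean step — applied to $a$, $t$, and an inverse $t^{*}$ of $t$ — yields a decomposition $qS(a',b') = \lambda(a,t,t^{*}) + \Delta(at^{*}+j)$, where $\Delta$ is a simple ``tail'' and $\lambda$ carries the principal dependence. The key structural facts to establish in Section~\ref{sec:reduce} are that $\Delta$ is pinned down (indeed essentially vanishes) modulo $q^2-1$, so that $qS(a',b') \equiv \lambda(a,t,t^{*}) \pmod{q^2-1}$, while modulo $q$ the term $\Delta(at^{*}+j)$ remains flexible enough to move $k \bmod q$ across every residue compatible with Theorem~\ref{qgood}. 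This reduces the conjecture to a single assertion: for a suitable specialization, the function $f(a) := \lambda(a,t,t^{*}) \bmod (q^2-1)$ is surjective onto $\Z/(q^2-1)\Z$ as $a$ ranges over an appropriate arithmetic progression.

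Third, to analyze $f \bmod (q^2-1)$ I would invoke a generalized three-term relation in the spirit of Rademacher's, which lets one write $f(a)$ as the sum of a linear term $c\cdot a \bmod (q^2-1)$ and a perturbation that is periodic in $a$ with small period and bounded range. If the slope $c$ is small — which, via the denominator formula, happens exactly when $\gcd(a,q)$ can be kept small — then letting $a$ run through the progression makes the linear part sweep $\Z/(q^2-1)\Z$ in small steps, so the bounded periodic perturbation cannot open up a gap, and $f$ is surjective. This argument goes through when $q$ is even and when $q$ is a perfect square divisible by $3$ or $5$; combined with a direct machine verification of the first reduction for $2\le q\le 200$, these are the cases settled in Section~\ref{sec:partial}.

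The main obstacle to a fully general proof is precisely the regime where $\gcd(a,q)$ is forced to be large — typically when $q$ is, or contains, a large prime power — so that the slope $c$ in the decomposition of $f$ is large and its orbit modulo $q^2-1$ is sparse. One then must understand whether the periodic part of $f$ fills in all the residues that the linear part skips. Making this interaction quantitative, i.e.\ proving the required surjectivity of $f \bmod (q^2-1)$ uniformly in $q$ — perhaps by choosing the auxiliary parameters $t,t^{*},j$ more cleverly, or by iterating the three-term relation — is the step I expect to be hardest, and it is the part left open here.
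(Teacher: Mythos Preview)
Your overall architecture matches the paper's --- the decomposition $qS(a',b') = \lambda(a,t,t^{*}) + \Delta(at^{*}+j)$, the reduction to a statement about $\lambda \pmod{q^2-1}$, and the three-term relation yielding a linear-plus-periodic formula for the specialization $f(a)$ --- but two of your structural claims are wrong and would derail the argument if followed as written.

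First, you have the roles of $q$ and $q^2-1$ reversed. The divisibility constraints of Theorem~\ref{qgood} (namely $3\mid k$, $4\mid k$, or $8\mid k$) are constraints modulo $q^2-1$, not modulo $q$: when $3\nmid q$ one has $3\mid q^2-1$, and when $2\nmid q$ one has $8\mid q^2-1$, so these conditions cut out a proper subgroup $\Gamma_q \subsetneq \Z/(q^2-1)\Z$. Modulo $q$ the only constraint is $\gcd(k,q)=1$. Hence the target for $\lambda \pmod{q^2-1}$ is $\Gamma_q$, not the full group; your claim that the mod $q^2-1$ problem is ``unconstrained and therefore requires a genuine surjectivity statement'' onto $\Z/(q^2-1)\Z$ is false and in fact provably unattainable.

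Second, $\Delta(\ell) = (q^2-1)\ell + qS(\ell,q)$ does \emph{not} vanish modulo $q^2-1$; it reduces to $qS(\ell,q)$, which is generically nonzero (e.g.\ $q=5$, $\ell=1$ gives $\Delta(1)\equiv 12 \pmod{24}$). So the identity $qS(a',b')\equiv \lambda \pmod{q^2-1}$ that you rely on is false. The paper instead shows $\lambda\equiv 0\pmod q$ while $\Delta$ induces a bijection on $(\Z/q\Z)^\times$, and then --- rather than separating cleanly by CRT --- proves that $(\lambda,\ell)\mapsto \lambda+\Delta(\ell)$ is \emph{injective} from $\widetilde{\Gamma}_q\times(\Z/q\Z)^\times$ into $\Z/q(q^2-1)\Z$ and concludes by a cardinality count. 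Finally, your ``small slope so no gap'' heuristic is not how the partial cases go: the point is that along the progression $a+mq$ the periodic part is \emph{constant}, so $f(a+mq)=f(a)+mq(g^2-1)$ exactly, and one chooses $g=\gcd(a,q)\in\{2,3,5\}$ so that $g^2-1\in\{3,8,24\}$ literally generates $\Gamma_q$.
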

This conjecture is our main focus of study.

\section{Reduction to \texorpdfstring{$\pmod{q^2-1}$}{mod}}
\label{sec:reduce}
We first reduce the problem from a question of existence $\pmod{q(q^2-1)}$ to an existence problem $\pmod{(q^2-1)}$. In proving Theorem \ref{finitemod}, Girstmair uses the generalized three-term relation (see \cite{Girstmair98}) to obtain a new formula for the normalized Dedekind sum in light of Theorem \ref{denom}.

\begin{lemma}[Girstmair, \cite{Girstmair17}]
\label{girstmairidentity}
Let $q,t$ be positive integers such that $q\ge 2$ and $\gcd(t,q)=1$, let $a$ be an integer such that $\gcd(a,q)=1$ and $t\mid a^2+1$, and let $b=\frac{q(a^2+1)}{t}$. Then
\[
S(a,b) = \frac{(q^2-1)a}{tq}-S(aq,t)+S(at^{*},q),
\]
where $t^{*}$ is any integer such that $t t^{*}\equiv 1 \pmod{q}$.
\end{lemma}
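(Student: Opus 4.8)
The plan is to derive the identity from a single application of Rademacher's three-term relation for Dedekind sums (in the form of the generalized three-term relation of \cite{Girstmair98}), followed by one use of the reciprocity law (Lemma \ref{recip}) and an elementary simplification of the leftover rational part. For the setup I would write $r = (a^2+1)/t$, so that $b = qr$ and $a^2+1 = tr$. Since $r,t\mid a^2+1$ we get $\gcd(a,r)=\gcd(a,t)=1$, and together with $\gcd(a,q)=\gcd(t,q)=1$ the three integers $a,t,q$ are pairwise coprime (and $\gcd(a,b)=1$, consistently with Theorem \ref{denom}). I would then apply the three-term relation to the triple $(a,t,q)$ in the cyclic form
\[
S(at^{*},q) + S(tq^{*},a) + S(qa^{*},t) = -3 + \frac{a}{tq} + \frac{t}{qa} + \frac{q}{at},
\]
where $t^{*}t\equiv 1\pmod q$, $q^{*}q\equiv 1\pmod a$, and $a^{*}a\equiv 1\pmod t$. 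The first term is already the $S(at^{*},q)$ appearing in the target.

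Next I would rewrite the other two Dedekind sums. For $S(qa^{*},t)$: the hypothesis $t\mid a^2+1$ gives $a^2\equiv -1\pmod t$, hence $a^{*}\equiv -a\pmod t$, so $S(qa^{*},t)=S(-aq,t)=-S(aq,t)$ — this is precisely where the divisibility condition enters, and it is the one step that is not automatic. For $S(tq^{*},a)$: using the standard symmetries $S(u,a)=S(u^{-1},a)$ and $S(-u,a)=-S(u,a)$ together with $tr\equiv 1\pmod a$ (so $t^{-1}\equiv r\pmod a$), one identifies $S(tq^{*},a)=S(qr,a)=S(b,a)$. Substituting these into the three-term relation and then eliminating $S(b,a)$ via reciprocity, $S(a,b)+S(b,a)=\tfrac ab+\tfrac ba+\tfrac1{ab}-3$, the constants $-3$ cancel and I am left with
\[
S(a,b) = S(at^{*},q) - S(aq,t) + \left(\frac ab + \frac ba + \frac1{ab} - \frac{a}{tq} - \frac{t}{qa} - \frac{q}{at}\right).
\]
Finally, substituting $b = q(a^2+1)/t$ collapses the parenthesized term: $\tfrac ab+\tfrac1{ab}=\tfrac{t}{qa}$ cancels two of the terms, and $\tfrac ba-\tfrac q{at}=\tfrac{qa}{t}$, leaving $\tfrac{qa}{t}-\tfrac{a}{tq}=\tfrac{(q^2-1)a}{tq}$, which is exactly the claimed rational part.

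The only real obstacle is bookkeeping rather than conceptual: one must pin down the exact cyclic form of the three-term relation with the modular inverses in the correct slots, and then track carefully which inverse is taken modulo which of $a$, $t$, $q$, so that the substitution $a^2\equiv -1$ is applied on the right factor (and produces the sign flip that turns $S(qa^{*},t)$ into $-S(aq,t)$). Once the inverses are correctly placed, the remaining manipulations are routine rational-function arithmetic. An alternative would be to bypass the three-term relation and reprove the needed instance directly by iterating reciprocity along the Euclidean algorithm, but that route is considerably less transparent and I would not expect it to be shorter.
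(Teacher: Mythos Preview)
Your argument is correct and is essentially the approach the paper points to: the paper does not reprove the lemma but attributes it to Girstmair \cite{Girstmair17}, noting only that it comes from the generalized three-term relation of \cite{Girstmair98}, which is precisely what you apply (together with reciprocity and the identity $S(u,a)=S(u^{-1},a)$). The only small omission is that the three-term relation as stated requires $a>0$; you should remark that both sides of the target identity are odd in $a$ (with $b$ unchanged), so the case $a\le 0$ reduces immediately to $a>0$.
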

Now fix an integer $a$, no longer necessarily coprime with $q$, such that $t\mid a^2+1$. We consider Dedekind sums in the form $S(a',b')$ where $a'\equiv a \pmod{t}$ and $\gcd(a', q)=1$, as motivated by \cite{Girstmair17}. If $a'= a+tj$ for some $j$ such that $\gcd(a',q)=1$ and $b' = \frac{q(a'^2+1)}{t}$, then $t\mid a'^2+1$ still holds, so $S(a',b')$ has reduced denominator $q$. Its numerator is
\begin{align*}
q S(a',b') &=\frac{(q^2-1)a}{t} +(q^2-1)j - qS(a'q,t)+qS(a't^{*},q)\\
&=\frac{(q^2-1)a}{t} +(q^2-1)j - qS(aq,t)+qS(at^{*}+j,q)\\
\begin{split}&= \left(\frac{(q^2-1)a}{t}-(q^2-1)at^{*}-qS(aq,t)\right)\\
&\quad +\left((q^2-1)(at^{*}+j)+q S(at^{*}+j,q)\right).
\end{split}
\end{align*}
The first expression is only dependent on $a,t,$ and $t^{*}$, while the second is only dependent on $at^{*}+j$. This motivates analyzing the two expressions separately. By considering a fixed $a$ and all $a'$ in the form $a'=a+tj$ for varying $j$, we can isolate the behavior of $qS(a',b')$ dependent on $a' \pmod{t}$ in the first term and the behavior dependent on $a'\pmod{q}$ in the second term.
\begin{defn}
Given a fixed integer $q\ge 2$, we define the functions $\lambda(a,t,t^{*})$ and $\Delta(\ell)$ by
\[
\lambda(a,t,t^{*}) = \frac{(q^2-1)a}{t} - (q^2-1)at^{*} - qS(aq,t)
\]
and
\[
\Delta(\ell) = (q^2-1)\ell+qS(\ell,q)
\]
where $\lambda$ is defined on all triples of integers $a,t,t^{*}$ such that $t>0$, $t\mid a^2+1$, and $tt^{*}\equiv 1\pmod{q}$, and $\Delta$ is defined on all integers $\ell$ such that $\gcd(\ell,q)=1$.
\end{defn}
Note that $\gcd(a+tj,q)=1$ if and only if $\gcd(at^{*}+j,q)=1$, so $\Delta(at^{*}+j)$ is well defined if and only if $\gcd(a+tj,q)=1$. We can now give our central decomposition of the numerator of $S(a',b').$
\begin{prop}
\label{split}
If integers $a,q,t,j,a',$ and $b'$ are given such that $t$ is positive, $q\ge 2$, $\gcd(t,q)=1$, $a'=a+tj$, $b' = \frac{q(a'^2+1)}{t}$, and $\gcd(a',q)=1$, then $S(a',b')$ has reduced denominator $q$ and numerator
\[
qS(a',b') = \lambda(a,t,t^{*}) +\Delta(at^{*}+j).
\]
Furthermore, $\lambda(a,t,t^{*})\in \Z$ and $\Delta(at^{*}+j)\in \Z$.
\end{prop}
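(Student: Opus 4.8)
The plan is to obtain the identity as a direct consequence of Girstmair's formula (Lemma~\ref{girstmairidentity}) applied to the pair $(a',b')$, together with a short bookkeeping computation, and then to read off the two integrality claims from facts already available.

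First I would dispose of the well-definedness points. Since $b'=q(a'^2+1)/t$ is an integer, $t\mid q(a'^2+1)$, and $\gcd(t,q)=1$ gives $t\mid a'^2+1$; as $a'\equiv a\pmod t$ this forces $t\mid a^2+1$, so $\lambda(a,t,t^{*})$ makes sense, and $\gcd(t,q)=1$ guarantees some $t^{*}$ with $tt^{*}\equiv 1\pmod q$. From $\gcd(a',a'^2+1)=1$, $\gcd(a',q)=1$ and $\gcd(t,q)=1$ one gets $\gcd(a',b')=1$ and $b'\ge 1$, so $S(a',b')$ is defined; and, as already observed just before the statement, $t(at^{*}+j)\equiv a+tj=a'\pmod q$, so $\gcd(at^{*}+j,q)=1$ and $\Delta(at^{*}+j)$ is defined. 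Next, Theorem~\ref{denom} pins down the reduced denominator: writing $n=(a'^2+1)/t$ we have $\gcd(b',a'^2+1)=\gcd(qn,tn)=n\gcd(q,t)=n$, so the reduced denominator is $b'/n=q$; in particular $qS(a',b')\in\Z$.

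For the main identity I would apply Lemma~\ref{girstmairidentity} with $a'$ in the role of $a$ (the hypotheses $\gcd(a',q)=1$, $t\mid a'^2+1$, $b'=q(a'^2+1)/t$, $\gcd(t,q)=1$ all hold), multiply by $q$, and substitute $a'=a+tj$:
\[
qS(a',b') = \frac{(q^2-1)a}{t} + (q^2-1)j - qS(a'q,t) + qS(a't^{*},q).
\]
Then I would simplify the two Dedekind-sum terms using the fact that $S(\cdot,m)$ depends only on its first argument modulo $m$: since $a'q\equiv aq\pmod t$ we have $S(a'q,t)=S(aq,t)$, and since $a't^{*}=(a+tj)t^{*}\equiv at^{*}+j\pmod q$ (using $tt^{*}\equiv 1\pmod q$) we have $S(a't^{*},q)=S(at^{*}+j,q)$. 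Adding and subtracting $(q^2-1)at^{*}$ now regroups the right-hand side as exactly $\lambda(a,t,t^{*})+\Delta(at^{*}+j)$, which is the asserted formula.

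Finally, for integrality, $qS(at^{*}+j,q)=12q\,s(at^{*}+j,q)\in\Z$ by the classical fact that $b\,s(a,b)\in\Z$, so $\Delta(at^{*}+j)=(q^2-1)(at^{*}+j)+qS(at^{*}+j,q)\in\Z$; combined with $qS(a',b')\in\Z$ from above, the identity forces $\lambda(a,t,t^{*})\in\Z$. I do not anticipate a genuine obstacle here: the computation is a direct substitution into an identity already in hand, and the only slightly delicate points are the well-definedness checks and confirming that the reduced denominator is exactly $q$ rather than a proper divisor, both of which are routine $\gcd$ manipulations.
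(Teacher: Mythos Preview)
Your proposal is correct and follows essentially the same approach as the paper: apply Girstmair's identity (Lemma~\ref{girstmairidentity}) to $(a',b')$, reduce the Dedekind-sum arguments modulo $t$ and $q$ respectively, add and subtract $(q^2-1)at^{*}$ to regroup into $\lambda+\Delta$, and then deduce integrality from $qS(\ell,q)\in\Z$ together with $qS(a',b')\in\Z$. The only differences are cosmetic: you spell out the well-definedness checks and the $\gcd$ computation for the reduced denominator more explicitly than the paper (which defers to the discussion preceding the proposition), and your phrasing ``$b\,s(a,b)\in\Z$'' should read ``$bS(a,b)\in\Z$'' to match the paper's normalization, but the conclusion $qS(at^{*}+j,q)\in\Z$ is unaffected.
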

\begin{proof}
We've shown that the identity for $qS(a',b')$ holds and that the denominator of $S(a',b')$ is reduced, so it suffices to show that $\lambda(a,t,t^{*})\in \Z$ and $\Delta(at^{*}+j) \in \Z$. But the denominator of $S(\ell,q)$ divides $q$, which implies that $qS(\ell,q)$ and therefore $\Delta(at^{*}+j)$ are integers. Furthermore $qS(a',b')\in \Z$, which implies $\lambda(a,t,t^{*}) = qS(a',b')-\Delta(at^{*}+j) \in \Z$.
\end{proof}
To understand the possible values of $qS(a',b')\pmod{q(q^2-1)}$, we analyze the behavior of $\lambda(a,t,t^{*})$ and $\Delta(\ell)$ when reduced $\pmod{q}$ and $\pmod{q^2-1}$. 
\begin{lemma}
\label{deltamodq}
If $\ell_1,\ell_2\in \Z$ such that $\gcd(\ell_1,q)=\gcd(\ell_2,q)=1$ and $\ell_1\equiv\ell_2\pmod{q}$, then $\Delta(\ell_1)\equiv \Delta(\ell_2)\pmod{q}$. Furthermore, as $\ell\pmod{q}$ ranges over $(\Z/q\Z)^{\times}$, $\Delta(\ell)\pmod{q}$ ranges over $(\Z/q\Z)^{\times}$. In other words, $\Delta$ is a bijection when considered as a mapping from $(\Z/q\Z)^{\times}$ to itself.

\end{lemma}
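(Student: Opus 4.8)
The plan is to pin down $\Delta(\ell)$ modulo $q$ exactly and then recognize the induced self-map of $(\Z/q\Z)^{\times}$ as inversion. The first assertion of the lemma is essentially free: since $S(a,b)$ depends only on $a\bmod b$, two integers $\ell_1\equiv\ell_2\pmod q$ coprime to $q$ satisfy $S(\ell_1,q)=S(\ell_2,q)$, whence $\Delta(\ell_1)-\Delta(\ell_2)=(q^2-1)(\ell_1-\ell_2)\equiv 0\pmod q$. So $\Delta$ descends to a well-defined map on $(\Z/q\Z)^{\times}$, and in what follows I may assume $1\le \ell\le q-1$.

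The crux is the congruence $qS(\ell,q)\equiv \ell+\ell^{*}\pmod q$, where $\ell\ell^{*}\equiv 1\pmod q$. To obtain it I would apply the reciprocity law (Lemma \ref{recip}) to the coprime positive integers $\ell,q$, in the form $S(\ell,q)+S(q,\ell)=\frac{\ell^2+q^2+1}{\ell q}-3$, and clear denominators by multiplying through by $\ell q$:
\[
\ell q\, S(\ell,q)+\ell q\, S(q,\ell)=\ell^2+q^2+1-3\ell q .
\]
The point of multiplying by $\ell q$ rather than merely by $q$ is that $\ell S(q,\ell)\in\Z$ (the denominator of $S(q,\ell)$ divides $\ell$), so the middle term is $\equiv 0\pmod q$; reducing the displayed equation mod $q$ leaves $\ell\cdot\bigl(qS(\ell,q)\bigr)\equiv \ell^2+1\pmod q$. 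Multiplying by $\ell^{*}$ and using $\ell\ell^{*}\equiv 1$ gives $qS(\ell,q)\equiv \ell^{*}\ell^{2}+\ell^{*}\equiv \ell+\ell^{*}\pmod q$. Hence
\[
\Delta(\ell)=(q^2-1)\ell+qS(\ell,q)\equiv -\ell+(\ell+\ell^{*})=\ell^{*}\pmod q .
\]

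It remains only to read off the conclusion. Since $\gcd(\ell^{*},q)=1$, the induced map sends each class $[\ell]\in(\Z/q\Z)^{\times}$ to $[\ell^{*}]=[\ell]^{-1}$, so it indeed lands in $(\Z/q\Z)^{\times}$, and it is literally the inversion map of the group $(\Z/q\Z)^{\times}$, which is a bijection (in fact an involution). Therefore as $\ell\bmod q$ ranges over $(\Z/q\Z)^{\times}$ so does $\Delta(\ell)\bmod q$, bijectively, which is exactly the assertion.

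The only step carrying real weight is the reciprocity manipulation, and even there the single idea is to clear the full denominator $\ell q$ so that the $S(q,\ell)$ term dies modulo $q$; everything else is bookkeeping using $q\ge 2$ and $\gcd(\ell,q)=1$. (As an alternative, the congruence $qS(\ell,q)\equiv\ell+\ell^{*}$ can also be extracted from the Rademacher $\Phi$-function evaluated on a matrix of $\SL_2(\Z)$ with bottom row $(q,\ell)$, using that $\Phi$ is integer-valued, but the reciprocity route needs nothing beyond what is already available in the excerpt.)
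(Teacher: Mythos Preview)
Your proof is correct and follows the same overall strategy as the paper: establish the congruence $qS(\ell,q)\equiv \ell+\ell^{*}\pmod q$, deduce $\Delta(\ell)\equiv\ell^{*}\pmod q$, and conclude that $\Delta$ induces the inversion map on $(\Z/q\Z)^{\times}$. The one genuine difference is in how the key congruence is obtained. The paper invokes the Rademacher function $\Phi$ on a matrix $M\in\SL_2(\Z)$ with bottom row $(q,\ell)$ and uses $\Phi(M)=\frac{\ell+\ell^{*}}{q}-S(\ell,q)\in\Z$; you instead multiply the reciprocity identity through by $\ell q$ so that the $S(q,\ell)$ term vanishes modulo $q$, and then cancel the extra factor of $\ell$ via $\ell^{*}$. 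Your route is the more self-contained one, since it uses only Lemma~\ref{recip} and the fact that $bS(a,b)\in\Z$, both already available in the paper, whereas the paper's route imports the integrality of $\Phi$ from an external reference. You even note the $\Phi$-function alternative yourself, so there is nothing to add.
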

\begin{proof}
The first assertion is clear from the periodicity of $S(\ell,q)$ in $\ell$. Now we claim
\[
q S(\ell,q) \equiv \ell+\ell^{*} \pmod{q}
\]
where $\ell^{*}$ is an integer such that $\ell \ell^{*} \equiv 1\pmod{q}$. Indeed, if $c$ is an integer such that $\ell\ell^{*}-cq=1$, and we define
\[
M = \begin{pmatrix}
\ell^{*} & c\\
q & \ell
\end{pmatrix} \in \SL_2(\Z),
\]
then (as $q>0$) the Rademacher function $\Phi:\SL_2(\Z)\to \Z$ (see, for example, \cite[p. 50]{Rademacher72}) is given by
\[
\Phi(M) = \frac{\ell+\ell^{*}}{q} - S(\ell,q).
\]
But since $\Phi(M) \in \Z$, we have $qS(\ell,q) \equiv \ell+\ell^{*} \pmod{q}$.

Now reducing $\Delta(\ell)\pmod{q}$, we get
\begin{align*}
\Delta(\ell) &\equiv (q^2-1)\ell+qS(\ell,q) \\
& \equiv -\ell + (\ell+\ell^{*})\\
&\equiv \ell^{*} \pmod{q}
\end{align*}
implying $\Delta(\ell)$ takes the value of exactly the invertible residues $\pmod{q}$ as $\ell$ ranges over the invertible residues $\pmod{q}$.

\end{proof}
Now we consider $\lambda(a,t,t^{*})\pmod{q}$.
\begin{lemma}
\label{lambdamodq}
For any integers $a,t,t^{*}$ with $t>0$, $t\mid a^2+1$, $\gcd(aq,t)=1$, and $tt^{*}\equiv 1 \pmod{q}$, we have
\[
\lambda(a,t,t^{*}) \equiv 0 \pmod{q}.
\]
\end{lemma}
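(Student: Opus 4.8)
The plan is to reduce the divisibility to a short congruence computation modulo $q$, exploiting that $\gcd(t,q)=1$ makes $t$ invertible modulo $q$. Concretely, I would clear the denominator $t$ from the defining formula for $\lambda$ and then read off everything modulo $q$, treating $\tfrac{(q^2-1)a}{t}-qS(aq,t)$ as a single integer.

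The preliminary point is that $\lambda(a,t,t^{*})$ is an honest integer --- one must know this before reducing it modulo $q$ --- and this is not visible term-by-term from the formula, since neither $\tfrac{(q^2-1)a}{t}$ nor $qS(aq,t)$ need be an integer on its own. I would get it from Proposition \ref{split}: choose $j$ with $a+tj\equiv 1\pmod q$ (possible since $t$ is a unit mod $q$), set $a'=a+tj$ and $b'=\tfrac{q(a'^2+1)}{t}$; because $a'\equiv a\pmod t$ and $t\mid a^2+1$ we get $t\mid a'^2+1$, so $b'$ is a positive integer and the hypotheses of Proposition \ref{split} hold, giving $\lambda(a,t,t^{*})=qS(a',b')-\Delta(at^{*}+j)\in\Z$. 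Writing $E:=\tfrac{(q^2-1)a}{t}-qS(aq,t)$, so that $\lambda(a,t,t^{*})=E-(q^2-1)at^{*}$, this also shows $E\in\Z$.

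Now comes the computation. Multiplying $E$ by $t$ gives $tE=(q^2-1)a-q\bigl(tS(aq,t)\bigr)$. Since $tS(aq,t)\in\Z$ (the denominator of the Dedekind sum $S(aq,t)$ divides $t$) and $q^2-1\equiv-1\pmod q$, we obtain $tE\equiv-a\pmod q$, hence $E\equiv-at^{*}\pmod q$ upon multiplying by $t^{*}$ and using $tt^{*}\equiv1\pmod q$. On the other hand $(q^2-1)at^{*}\equiv-at^{*}\pmod q$, so the two terms cancel:
\[
\lambda(a,t,t^{*})=E-(q^2-1)at^{*}\equiv(-at^{*})-(-at^{*})\equiv 0\pmod q.
\]

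I do not expect a genuine obstacle here; every step is elementary. The one thing to be careful about is that the lemma does not assume $\gcd(a,q)=1$, so Girstmair's identity (Lemma \ref{girstmairidentity}) cannot be applied directly to $a$ to conclude that $\lambda$ is an integer --- shifting $a$ into a residue coprime to $q$ via Proposition \ref{split}, as above, is precisely what circumvents this.
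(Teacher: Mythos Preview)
Your argument is correct and is essentially the paper's own proof: the paper also multiplies through by $t$ and reduces modulo $q$, writing $t\lambda(a,t,t^{*})=(q^2-1)a(1-tt^{*})-q\,tS(aq,t)\equiv 0\pmod q$ and then cancelling the unit $t$. Your extra care in invoking Proposition~\ref{split} to secure $\lambda\in\Z$ beforehand is a welcome clarification that the paper leaves implicit, but the route is the same.
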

\begin{proof}
Note that $\gcd(q,t)=1$. Then
\begin{align*}
t \lambda(a,t,t^{*})&\equiv (q^2-1)a(1-tt^{*}) - qtS(aq,t)\\
&\equiv 0\pmod{q}
\end{align*}
because $tt^{*}\equiv 1\pmod{q}$ and $tS(aq,t)\in \Z$.
\end{proof}
Finally, we consider $\lambda(a,t,t^{*})\pmod{q^2-1}$.
 
\begin{defn}
Define $\Lambda_q \subseteq \Z/(q^2-1)\Z$ by
\[
\Lambda_q = \left\{\frac{(q^2-1)a}{t} - qS(aq,t)\pmod{q^2-1}:\gcd(aq,t)=1, t \mid a^2+1,t>0\right\}.
\]
\end{defn}
\begin{rem}
Alternatively, $\Lambda_q$ gives the range of $\lambda$:
\[
\Lambda_q = \left\{\lambda(a,t,t^{*})\pmod{q^2-1} : \gcd(aq,t)=1,t\mid a^2+1,tt^{*}\equiv 1 \pmod{q},t>0\right\}.
\]
\end{rem}
The characterizations are equivalent because the $(q^2-1)at^{*}$ term vanishes $\pmod{q^2-1}$.

\begin{defn}
Define $\Gamma_q\subseteq \Z/(q^2-1)\Z$ by 
\[
\Gamma_q = \begin{cases}
\{24s: s \in \Z/(q^2-1)\Z\}& q\text{ is a square}\\
\{12s: s \in \Z/(q^2-1)\Z\}&\text{else.}
\end{cases}
\]
\end{defn}
\begin{lemma}
For any integer $q\ge 2$,
\[
\Lambda_q\subseteq\Gamma_q
\]
\end{lemma}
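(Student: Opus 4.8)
The plan is to transport Girstmair's necessary conditions (Theorem \ref{qgood}) through the decomposition $qS(a',b')=\lambda(a,t,t^*)+\Delta(at^*+j)$ of Proposition \ref{split}. First observe that $\Gamma_q$ is the image in $\Z/(q^2-1)\Z$ of the ideal $12\Z+(q^2-1)\Z=\gcd(12,q^2-1)\Z$ (and of $\gcd(24,q^2-1)\Z$ in the square case); since $\gcd(12,q^2-1)$ divides $q^2-1$, the condition ``$\lambda(a,t,t^*)\bmod(q^2-1)\in\Gamma_q$'' is equivalent to ``$\gcd(12,q^2-1)$ divides $\lambda(a,t,t^*)$'' (resp.\ $\gcd(24,q^2-1)$ when $q$ is a square). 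By the Remark following the definition of $\Lambda_q$, every element of $\Lambda_q$ is $\lambda(a,t,t^*)\bmod(q^2-1)$ for some triple with $t>0$, $t\mid a^2+1$, $\gcd(aq,t)=1$ and some $t^*$ with $tt^*\equiv 1\pmod q$, and $\lambda(a,t,t^*)\in\Z$ by Proposition \ref{split}. So we fix such data and must show $\gcd(12,q^2-1)\mid\lambda(a,t,t^*)$, respectively $\gcd(24,q^2-1)\mid\lambda(a,t,t^*)$ for $q$ a square.

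The key move is to choose $j$ so that the $\Delta$-term becomes explicit. Put $j=1-at^*$, $a'=a+tj$, and $b'=\frac{q(a'^2+1)}{t}$. Then $at^*+j=1$, so $\gcd(a',q)=\gcd(at^*+j,q)=1$, and since $a'\equiv a\pmod t$ gives $\gcd(a',t)=1$ we also get $\gcd(a',b')=1$. Proposition \ref{split} therefore yields $k:=qS(a',b')=\lambda(a,t,t^*)+\Delta(1)$ with $\gcd(k,q)=1$. From $s(1,q)=\frac{(q-1)(q-2)}{12q}$ we get $qS(1,q)=(q-1)(q-2)$, whence $\Delta(1)=(q^2-1)+(q-1)(q-2)=(q-1)(2q-1)$. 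Thus it suffices to show that $\lambda(a,t,t^*)=k-(q-1)(2q-1)$ is divisible by $\gcd(12,q^2-1)$, resp.\ $\gcd(24,q^2-1)$.

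Now apply Theorem \ref{qgood} to $S(a',b')=\frac{k}{q}$ and compare residues. For the $2$-part: if $q$ is even then $q^2-1$ is odd and nothing is needed; if $q$ is odd, one checks according to $q\bmod 4$ (and $q\bmod 8$ in the square case) that $(q-1)(2q-1)$ is congruent, to the relevant power-of-$2$ modulus, to the residue that Theorem \ref{qgood} forces on $k$ --- e.g.\ $q\equiv 3\pmod 4$ gives $(q-1)(2q-1)\equiv 2\equiv k\pmod 4$, $q\equiv 1\pmod 4$ non-square gives $(q-1)(2q-1)\equiv 0\equiv k\pmod 4$, and an odd square $q\equiv 1\pmod 8$ gives $(q-1)(2q-1)\equiv 0\equiv k\pmod 8$ --- so $\lambda(a,t,t^*)$ is divisible by $4$ (resp.\ $8$), which is the full $2$-part of $\gcd(12,q^2-1)$ (resp.\ $\gcd(24,q^2-1)$) since $8\mid q^2-1$ for odd $q$ and $16\mid q^2-1$ for an odd square $q$. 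For the factor of $3$: if $3\mid q$ then $q^2-1\equiv 2\pmod 3$, so neither gcd carries a factor of $3$ and nothing is needed; if $3\nmid q$ then $q\equiv 1$ or $2\pmod 3$, in both cases $3\mid(q-1)(2q-1)$, and Theorem \ref{qgood} gives $3\mid k$, so $3\mid\lambda(a,t,t^*)$. Combining the $2$- and $3$-parts gives the required divisibility, hence $\Lambda_q\subseteq\Gamma_q$.

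There is no conceptual obstacle; the only real work is the case bookkeeping in the last paragraph, where for each class of $q$ modulo small numbers one must match the power of $2$ and the factor of $3$ dividing $q^2-1$ against the congruence Theorem \ref{qgood} imposes on the numerator $k$. In essence $\Lambda_q\subseteq\Gamma_q$ is Girstmair's necessity theorem, re-read through the identity $qS(a',b')=\lambda(a,t,t^*)+\Delta(1)$ together with the explicit value $\Delta(1)=(q-1)(2q-1)$.
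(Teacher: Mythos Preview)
Your proof is correct and follows the same overall framework as the paper---both write $\lambda(a,t,t^*)=qS(a',b')-\Delta(at^*+j)$ via Proposition \ref{split} and then import Girstmair's necessary conditions (Theorem \ref{qgood}) on the numerator $k=qS(a',b')$---but you diverge in how you handle the $\Delta$ term. The paper works with a generic $j$ and invokes the congruence $qS(\ell,q)\equiv q+1-2\bigl(\tfrac{\ell}{q}\bigr)\pmod 8$ from \cite[p.~34]{Rademacher72} to control $\Delta(at^*+j)\pmod 8$; the Jacobi symbol then drops out because it is $\pm1$ (and $+1$ when $q$ is a square). You instead fix $j=1-at^*$ so that $at^*+j=1$, reducing everything to the explicit value $\Delta(1)=(q^2-1)+qS(1,q)=(q-1)(2q-1)$, and then do a direct case check of $(q-1)(2q-1)$ modulo $3$, $4$, and $8$. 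Your route is more elementary in that it avoids the Jacobi-symbol identity entirely, at the cost of a few lines of residue arithmetic; the paper's route, on the other hand, makes visible the role of the quadratic character in the square case. Both are short and essentially equivalent in difficulty. (A small aside: your remark that $16\mid q^2-1$ for odd square $q$ is true but more than you need; $8\mid q^2-1$ already suffices to give $\gcd(8,q^2-1)=8$.)
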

\begin{proof}
It suffices to show the following claims:
\begin{itemize}
\item If $3\nmid q$, then $3\mid \lambda(a,t,t^{*})$.
\item If $2\nmid q$, then
\[
\lambda(a,t,t^{*})\equiv \begin{cases}
0 \pmod{8}& q\text{ is a square}\\
0 \pmod{4}&\text{else.}
\end{cases}
\]
To prove both, we note that for any $a$ such that $t\mid a^2+1$, there exists some $a' = a+tj$ such that $\gcd(at^{*}+j,q)=1$ and therefore $\gcd(a',q)=1$, recalling that $\gcd(t,q)=1$. Then by Proposition \ref{split}, we may write
\[
\lambda(a,t,t^{*}) = qS(a',b') - \Delta(at^{*}+j)
\]
for $b'=\displaystyle\frac{(a'^2+1)q}{t}$.

First, suppose $3\nmid q$. By Theorem $\ref{qgood}$, $3\mid qS(a',b')$, and similarly $3 \mid q S(at^{*}+j,q)$, as the denominator of $S(at^{*}+j,q)$ is a divisor of $q$, which is coprime with $3$. But $3\mid (q^2-1)$, so $3 \mid \Delta(at^{*}+j)$ and $3 \mid \lambda(a,t,t^{*})$.

Now suppose $2\nmid q$. By Theorem \ref{qgood},
\[
qS(a',b') \equiv q-1 \pmod{4}.
\]
Furthermore, $8 \mid q^2-1$, and by \cite[p. 34]{Rademacher72},
\[
qS(at^{*}+j,q)\equiv q+1-2\left(\frac{at^{*}+j}{q}\right)\pmod{8},
\]
where $\left(\frac{at^{*}+j}{q}\right)$ denotes the Jacobi symbol. This implies
\[
\Delta(at^{*}+j)\equiv qS(at^{*}+j,q)\equiv q+1-2\left(\frac{at^{*}+j}{q}\right)\pmod{8}
\]
and
\begin{align*}
\lambda(a,t,t^{*})&\equiv qS(a',b')-\Delta(at^{*}+j) \\
&\equiv \left(q-1\right)-\left(q+1-2\left(\frac{at^{*}+j}{q}\right)\right)\\
&\equiv -2+2\left(\frac{at^{*}+j}{q}\right)\\
&\equiv 0\pmod{4}.
\end{align*}
\end{itemize}
Furthermore, if $q$ is a square, then by Theorem \ref{qgood}, $qS(a',b')\equiv 0 \pmod{8}$, and since the Jacobi symbol is multiplicative in the denominator, $\left(\frac{at^{*}+j}{q}\right) \equiv 1 \pmod{8}$. Thus
\begin{align*}
qS(at^{*}+j,q) &\equiv q+1-2\left(\frac{at^{*}+j}{q}\right) \equiv 1+1-2\left( 1 \right) \equiv 0\pmod{8}
\end{align*}
implying $\lambda(a,t,t^{*})\equiv qS(a',b') - \Delta(at^{*}+j)\equiv 0\pmod{8}$, as desired.

\end{proof}

Finally, we present our main conjecture, which specifies the range of $\lambda(a,t,t^{*})$ and, as we will see, implies Conjecture \ref{conj1}.
\begin{conj}
\label{conj2}
For any integer $q\ge 2$,
\[
\Lambda_q=\Gamma_q
\]
\end{conj}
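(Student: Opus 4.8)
The inclusion $\Lambda_q\subseteq\Gamma_q$ has just been established, so the plan is to prove the reverse inclusion $\Gamma_q\subseteq\Lambda_q$. Since $\Z/(q^2-1)\Z$ is cyclic, $\Gamma_q$ is a cyclic subgroup of index $\gcd(12,q^2-1)$ (resp.\ $\gcd(24,q^2-1)$ when $q$ is a square), but $\Lambda_q$ is so far only known to be a \emph{subset} of it, not a subgroup, so it does not suffice to place a single generator of $\Gamma_q$ inside $\Lambda_q$. Instead I would aim to exhibit inside $\Lambda_q$ an arithmetic progression — or a small union of translated progressions — already exhausting $\Gamma_q$. To that end I would not let $(a,t,t^\ast)$ range freely: fix the residue of $a$ modulo $q$ (pinning down the ``$q$-part'' of the data), and let $t$ be a prescribed divisor of $a^2+1$ chosen as a controlled function of $a$, so that $\lambda(a,t,t^\ast)\bmod(q^2-1)$ becomes an explicit one-variable function $f(a)$.

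The next step is to analyze $f$. Combining Girstmair's identity (Lemma~\ref{girstmairidentity}), reciprocity (Lemma~\ref{recip}), and the generalized three-term relation for Dedekind sums, one should be able to separate, inside $S(aq,t)$, the contribution that grows linearly in $a$ from the one that is periodic in $a$: I expect a decomposition of the shape $f(a)\equiv c\,a+g(a)\pmod{q^2-1}$, with $c$ a fixed integer and $g$ periodic of some period $P$ built from $q$ (a divisor of $q^2-1$, or of $q\pm1$, times the modulus chosen for $a$); the lemma bounding $\Lambda_q$ forces the whole range of $f$, and in particular $cP$, into $\Gamma_q$. Restricted to $a$ in one residue class modulo $P$, $f$ sweeps out an arithmetic progression with common difference $cP$, and as the class of $a$ varies the periodic part $g$ supplies the $P$ offsets. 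Surjectivity of $f$ onto $\Gamma_q$ then reduces to the elementary statement that these progressions jointly cover $\Gamma_q$ — which holds using only one class as soon as $cP$ generates $\Gamma_q$ in $\Z/(q^2-1)\Z$, i.e.\ as soon as the slope $c$ (which in this parametrization is controlled by $\gcd(a,q)$) is small enough. This is already decisive for large families of $q$: when $q$ is even, $q^2-1$ is odd and $\gcd(12,q^2-1)\mid 3$, so $\Gamma_q$ has index at most $3$ and a single modest progression suffices; the perfect-square cases with $3\mid q$ or $5\mid q$ fall to the same bookkeeping, and small $q$ can be cleared directly by searching the finitely many $(a,t)$ using the $\bmod(q^2-1)$ analysis of Proposition~\ref{split} and Lemmas~\ref{deltamodq} and \ref{lambdamodq}.

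The main obstacle is uniform control of the slope $c$ — equivalently of $\gcd(c,q^2-1)$ — across all $q$. When $q$ is, say, a prime power with few units, or more generally when the auxiliary sums $S(aq,t)$ are forced to share a large common factor with $q^2-1$, the progression produced by a single family is too sparse and the cover fails. One is then pushed to combine several genuinely different families (varying the prescribed shape of $t$, or the fixed residue of $a$ modulo $q$) and to prove that the resulting progressions are \emph{jointly} surjective onto $\Gamma_q$; I do not know a construction achieving this uniformly in $q$, and I would expect the full conjecture to remain beyond this method until either a cleverer parametrization of $(a,t)$ is found or some new input beyond the three-term relation is introduced. The realistically reachable targets are precisely the $q$ for which the index $[\Z/(q^2-1)\Z:\Gamma_q]$ is small, or for which $\gcd(a,q)$ can be forced down to $1$ or $2$, together with a finite verification for small $q$.
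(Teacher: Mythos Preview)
Your proposal is essentially the paper's own approach, and you have correctly recognized that Conjecture~\ref{conj2} is \emph{not} proved in full generality here: the paper establishes it only for $q$ even, for $q$ a square divisible by $3$ or $5$, and for $2\le q\le 200$ by computer, and explicitly leaves the general case open.

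A few points where the paper is sharper than your outline. The parametrization is simply $t=a^2+1$, and the key identity (Lemma~\ref{linear}) is exactly your anticipated linear-plus-periodic decomposition, with the slope given explicitly as $c=g^2-1$ where $g=\gcd(a,q)$, and the periodic part having period exactly $q$. Because $\gcd(q,q^2-1)=1$, the progression $\{f(a)+mq(g^2-1)\}_m$ already hits every residue in $f(a)+(g^2-1)\Z/(q^2-1)\Z$; so one needs only $g^2-1$ (not $q(g^2-1)$) to generate $\Gamma_q$, and a single residue class of $a$ suffices whenever such a $g$ exists. Your speculation about combining several classes via the periodic offsets is not used in the paper's proofs and is not needed for the cases it settles. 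The paper's further suggestion for attacking general $q$ is not to vary the residue of $a$ but to enlarge the parametrization to $t=x^2+y^2$ (the function $h(x,y)$), which is what drives the computer verification.

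Your diagnosis of the obstruction is accurate: when no small $g=\gcd(a,q)$ with $g^2-1$ generating $\Gamma_q$ is available (e.g.\ $q$ an odd nonsquare, or an odd square coprime to $15$), the single-family argument is too coarse, and the paper does not claim to overcome this.
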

In a sense, Conjecture \ref{conj2} states that $\lambda(a,t,t^{*})$ takes on all possible values $\pmod{q^2-1}$ after accounting for the $\pmod{3}$ and $\pmod{8}$ restrictions of $\Gamma_q$. Similarly, this implies that the normalized Dedekind sum takes on all possible values after accounting for the local restrictions given by Theorem \ref{qgood}.
\begin{thm}
For each $q$, Conjecture \ref{conj2} implies Conjecture \ref{conj1}.
\end{thm}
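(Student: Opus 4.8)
The plan is to combine Theorem \ref{finitemod} with the decomposition of Proposition \ref{split}, splitting the congruence modulus $q(q^2-1)$ into its coprime factors $q$ and $q^2-1$ via the Chinese Remainder Theorem. Since Theorem \ref{qgood} already gives the necessity of the stated conditions, it suffices to prove, assuming $\Lambda_q=\Gamma_q$, that every integer $k$ with $\gcd(k,q)=1$ meeting the conditions of Theorem \ref{qgood} satisfies $k\equiv qS(a',b')\pmod{q(q^2-1)}$ for some $a',b'$ of the form in Proposition \ref{split}. Indeed, if $k'=qS(a',b')$ is such a value, then $\gcd(k',q)=1$ by Proposition \ref{split} and $\frac{k'}{q}=S(a',b')$ is a normalized Dedekind sum, so Theorem \ref{finitemod} makes $\frac{k}{q}$ one as well, which is exactly Conjecture \ref{conj1}.

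I would first pin down the class modulo $q$. By Lemma \ref{lambdamodq} we have $\lambda(a,t,t^*)\equiv 0\pmod q$, so in $qS(a',b')=\lambda(a,t,t^*)+\Delta(at^*+j)$ the class modulo $q$ equals $\Delta(at^*+j)$; and since $\Delta$ is a bijection of $(\Z/q\Z)^\times$ (Lemma \ref{deltamodq}), there is a unique $\ell_0\in(\Z/q\Z)^\times$, namely $\ell_0\equiv k^{-1}\pmod q$, with $\Delta(\ell_0)\equiv k\pmod q$. Fix an integer representative $\ell_0$ coprime to $q$; we will arrange $at^*+j\equiv\ell_0\pmod q$. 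Now modulo $q^2-1$ one has $\Delta(\ell)\equiv qS(\ell,q)\pmod{q^2-1}$, which depends only on $\ell\bmod q$ by periodicity of $S(\cdot,q)$; hence the constraint $at^*+j\equiv\ell_0\pmod q$ already forces $\Delta(at^*+j)\equiv\Delta(\ell_0)\pmod{q^2-1}$. So it remains to find a triple $(a,t,t^*)$ in the domain of $\lambda$ with $\lambda(a,t,t^*)\equiv k-\Delta(\ell_0)\pmod{q^2-1}$, and by the assumption $\Lambda_q=\Gamma_q$ this is possible exactly when $k-\Delta(\ell_0)\in\Gamma_q$ as a class modulo $q^2-1$.

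The crux is therefore the claim $k-\Delta(\ell_0)\in\Gamma_q\pmod{q^2-1}$. The key observation is that $\Delta(\ell_0)$ is itself a genuine numerator: applying Proposition \ref{split} with $a=0$, $t=1$, $j=\ell_0$ — for which $\lambda(0,1,1)=-qS(0,1)=0$ — gives $qS(\ell_0,q(\ell_0^2+1))=\Delta(\ell_0)$, and by Theorem \ref{denom} the sum $S(\ell_0,q(\ell_0^2+1))$ has reduced denominator $q$. Thus $\Delta(\ell_0)$ satisfies the congruence conditions of Theorem \ref{qgood}, and since $k$ does too, $k-\Delta(\ell_0)$ is divisible by the modulus $M_q$ for which Theorem \ref{qgood} is equivalent to a single congruence $k\equiv c_q\pmod{M_q}$ with $c_q$ depending only on $q$. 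A short check of $2$-adic and $3$-adic valuations (using $8\mid q^2-1$ for $q$ odd and $3\mid q^2-1$ precisely when $3\nmid q$) shows $M_q=\gcd(12,q^2-1)$ when $q$ is not a square and $M_q=\gcd(24,q^2-1)$ when $q$ is a square; since a class modulo $q^2-1$ lies in $\Gamma_q$ iff it is divisible by this gcd, $k-\Delta(\ell_0)\in\Gamma_q$ follows.

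Finally I would assemble the pieces: take a triple $(a,t,t^*)$ in the domain of $\lambda$ with $\lambda(a,t,t^*)\equiv k-\Delta(\ell_0)\pmod{q^2-1}$ (being in the domain already gives $t>0$, $t\mid a^2+1$, $\gcd(aq,t)=1$), choose any integer $j$ with $at^*+j\equiv\ell_0\pmod q$ — this makes $a'=a+tj\equiv t\ell_0\pmod q$, so $\gcd(a',q)=1$ automatically — and set $b'=q(a'^2+1)/t$. Then Proposition \ref{split} gives $qS(a',b')=\lambda(a,t,t^*)+\Delta(at^*+j)$, which is $\equiv k\pmod{q^2-1}$ and $\equiv\Delta(\ell_0)\equiv k\pmod q$, hence $\equiv k\pmod{q(q^2-1)}$ by the Chinese Remainder Theorem, as required. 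The one step carrying real content is the valuation check, i.e.\ verifying that the $12$-versus-$24$ dichotomy in the definition of $\Gamma_q$ reproduces, after intersection with $q^2-1$, exactly the square/non-square dichotomy of Theorem \ref{qgood}; everything else is a direct unwinding of the lemmas of Section \ref{sec:reduce}.
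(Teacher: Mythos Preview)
Your argument is correct, and it takes a genuinely different route from the paper's. The paper proves the implication by a counting argument: it defines $\phi:\widetilde{\Gamma}_q\times(\Z/q\Z)^\times\to\Z/q(q^2-1)\Z$ by $\phi(\lambda,\ell)=\lambda+\Delta(\ell)$, shows $\phi$ is injective, and then verifies that $|\widetilde{\Gamma}_q|\cdot\varphi(q)=|\Gamma_q|\cdot\varphi(q)$ equals the number of residue classes $k\pmod{q(q^2-1)}$ satisfying Theorem~\ref{qgood}; since the image of $\phi$ is contained in that set, equality of cardinalities forces equality of sets. You instead construct the witness directly: given a target $k$, you first pin down $\ell_0$ to match $k\bmod q$, then observe that $\Delta(\ell_0)=qS(\ell_0,q(\ell_0^2+1))$ is itself a numerator with reduced denominator $q$ (via the degenerate triple $a=0$, $t=1$), so $\Delta(\ell_0)$ obeys Theorem~\ref{qgood} and hence $k-\Delta(\ell_0)$ lands in $\Gamma_q$, at which point the hypothesis $\Lambda_q=\Gamma_q$ supplies the needed $\lambda$-value. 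Your approach trades the cardinality bookkeeping for the single observation that $\Delta(\ell)$ is already a Dedekind-sum numerator, which makes the membership $k-\Delta(\ell_0)\in\Gamma_q$ immediate rather than a consequence of a pigeonhole; the paper's route, by contrast, never needs to identify $\Delta(\ell)$ with an actual Dedekind sum or match up the $M_q$ valuations explicitly.
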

\begin{proof}
Suppose $a,q,t,a',b',$ and $j$ are as in Proposition \ref{split}. We have established the following facts:
\begin{itemize}
\item The denominator of $S(a',b')$ is $q$, so its numerator is
\[
qS(a',b') = \lambda(a,t,t^{*}) + \Delta(at^{*}+j).
\]
\item
It always holds that
\[
\lambda(a,t,t^{*})\equiv 0 \pmod{q}.
\]
\item
As $at^{*}+j$ ranges over all invertible residues $\pmod{q}$, the remainder of $\Delta(at^{*}+j)$ also ranges over all invertible residues $\pmod{q}$.
\item
Assuming Conjecture \ref{conj2}, the range of $\lambda(a,t,t^{*})$ when reduced $\pmod{q^2-1}$ is $\Lambda_q=\Gamma_q$.
\end{itemize}
For the rest of the proof, assume Conjecture \ref{conj2} does hold. It is at this point that we finally use our full flexibility to generate pairs $a',b'$ based on a choice of $a$ and a variable choice of $j$. Given any $a,t,$ and $t^{*}$, we may choose $j$ independently to yield new values $a'$ and $b'$ as long as $\gcd(a',q)=\gcd(at^{*}+j,q)=1$. Then for any residue $\lambda(a,t,t^{*})\pmod{q(q^2-1)}$ and any residue $\Delta(\ell)\pmod{q(q^2-1)}$, the sum $\lambda(a,t,t^{*}) +\Delta(\ell)\pmod{q(q^2-1)}$ is the residue $\pmod{q(q^2-1)}$ of the numerator $qS(a',b')$ of some Dedekind sum $S(a',b')$ with reduced denominator $q$.

More precisely, since $\Lambda_q=\Gamma_q$ but $\lambda(a,t,t^{*})\equiv 0 \pmod{q}$ always, we have that $\lambda(a,t,t^{*})\pmod{q(q^2-1)}$ can take any value from $\widetilde{\Gamma}_q \subseteq \Z/q(q^2-1)\Z$, defined by
\[
\widetilde{\Gamma}_q = \left\{s \in \Z/q(q^2-1)\Z: s \equiv 0 \pmod{q}, s \pmod{q^2-1} \in \Gamma_q\right\},
\]
where each element in $\Gamma_q$ lifts uniquely to an element of $\widetilde{\Gamma}_q$ by the Chinese Remainder Theorem. Again because $\Delta(\ell)$ is periodic in $\ell$ with period $q$, it can be considered as a map of $\ell \in (\Z/q\Z)^{\times}$. Then if we define $\phi: \widetilde{\Gamma}_q\times (\Z/q\Z)^{\times}\to \Z/(q(q^2-1))\Z$ by
\[
\phi(\lambda,\ell) = \lambda+\Delta(\ell)\pmod{q(q^2-1)},
\]
we must have that the set of possible numerators $qS(a',b')$ is the image of $\phi$.

We claim $\phi$ is injective. Indeed, suppose there exist $\lambda_1,\lambda_2 \in \widetilde{\Gamma}_q$ and $\ell_1,\ell_2 \in (\Z/q\Z)^{\times}$ such that 
\[
\lambda_1+\Delta(\ell_1) \equiv \lambda_2 + \Delta(\ell_2)\pmod{q(q^2-1)}.
\]
Since $\lambda_1\equiv \lambda_2 \equiv 0 \pmod{q}$, reducing $\pmod{q}$ yields
\[
\Delta(\ell_1)\equiv \Delta(\ell_2) \pmod{q}
\]
so $\ell_1\equiv \ell_2 \pmod{q}$. This implies $\Delta(\ell_1) = \Delta(\ell_2)$, and so
\[
\lambda_1\equiv \lambda_2 \pmod{q(q^2-1)}.
\]
Then $\phi$ is injective, so the number of residue classes of possible numerators $\pmod{q(q^2-1)}$ is $|\widetilde{\Gamma}_q|\varphi(q) = |\Gamma_q|\varphi(q)$, where $\varphi$ denotes Euler's totient function. We claim that the number of residue classes $k$ satisfying the conditions of Theorem \ref{qgood} is also $|\Gamma_q|\varphi(q)$. Indeed, note that $2\nmid q$ if and only if $8 \mid q^2-1$ and $3\nmid  q$ if and only if $3\mid q^2-1$. Since $\gcd(q,q^2-1)=1$, by the Chinese Remainder Theorem the number of such residue classes $k\pmod{q(q^2-1)}$ is
\[
\varphi(q)\left(\frac{q^2-1}{c_2(q)c_3(q)}\right)
\]
where
\[
c_2(q) = \begin{cases}
1 & 2\mid q\\
4 & 2\nmid q \text{ and }q\text{ nonsquare} \\
8 & 2\nmid q \text{ and }q\text{ square}
\end{cases}
\]
and
\[
c_3(q) = \begin{cases}
1 & 3 \mid q\\
3 & 3 \nmid q.
\end{cases}
\]
But $8\mid q^2-1$ if $2\nmid q$ and $3\mid q^2-1$ if $3\nmid q$, while $2\nmid q^2-1$ if $2\mid q$ and $3\nmid q^2-1$ if $3\mid q$, so 
\[
|\Gamma_q| = \frac{q^2-1}{c_2(q)c_3(q)}
\]
as well. Thus all possible residue classes $k \pmod{q(q^2-1)}$ satisfying the conditions of Theorem $\ref{qgood}$ are achievable as numerators of normalized Dedekind sums of denominator $q$. This, together with Theorem \ref{finitemod}, establishes Conjecture \ref{conj1}. 

\end{proof}
\section{A Partial Resolution of Conjecture \ref{conj2}}
\label{sec:partial}

In this section, we introduce identities that will allow us to prove Conjecture \ref{conj2} (and therefore Conjecture \ref{conj1}) in specific cases, such as $q$ even or $q$ an odd square divisible by $3$ or $5$. We also hypothesize an approach to proving Conjecture \ref{conj2} in general and provide proof by computer verification for all $2\le q \le 200$.

\begin{defn}
For a given positive integer $q$ and an integer $a$ such that $\gcd(q,a^2+1)=1$ let
\[
f(a) = \frac{(q^2-1) a}{a^2+1}-q S(aq,a^2+1).
\]
\end{defn}
Note that by setting $t=a^2+1$, we have $f(a)\pmod{q^2-1} \in \Lambda_q$ for any integer $a$ such that $\gcd(q,a^2+1)=1$ (recall $f(a)$ is an integer). 

The following identity reveals that $f(a)$ is a piecewise linear function depending on the residue class of $a\pmod{q}$.
\begin{lemma}
\label{linear}
If an integer $a$ satisfies $\gcd(q,a^2+1)=1$, then 
\[
f(a) = (g^2-1)a+S(a_1,q_1)+S(-a_1g^2-a_1^{*},q_1)
\]
where $g = \gcd(a,q)>0,a=ga_1,q=gq_1$, and $a_1a_1^{*}\equiv 1 \pmod{q_1}$.
\end{lemma}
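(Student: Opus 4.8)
The plan is to deduce the formula from the three-term relation for Dedekind sums applied to a carefully chosen triple of moduli, followed by one use of reciprocity to absorb the leftover terms. I begin with two reductions. Both sides of the asserted identity are odd in $a$: sending $a\mapsto-a$ leaves $g$, $q_1$ and $n:=a^{2}+1$ unchanged, sends $a_1\mapsto-a_1$ and $a_1^{*}\mapsto-a_1^{*}\pmod{q_1}$, and $S(-h,m)=-S(h,m)$; the case $a=0$ is immediate. So I may assume $a\ge1$, hence $a_1,q_1\ge1$ and $n\ge2$. The hypotheses then force $n,a_1,q_1$ to be pairwise coprime: $\gcd(a_1,q_1)=1$ since $g=\gcd(a,q)$, $\gcd(a_1,n)\mid\gcd(a,a^{2}+1)=1$, and $\gcd(q_1,n)\mid\gcd(q,a^{2}+1)=1$.

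Next I would apply the three-term relation (the generalized form of \cite{Girstmair98}, used to prove Lemma \ref{girstmairidentity}, is more than enough), which in normalized form reads
\[
S(vw',u)+S(wu',v)+S(uv',w)=-3+\frac{u}{vw}+\frac{v}{wu}+\frac{w}{uv}
\]
for pairwise coprime positive integers $u,v,w$, with $ww'\equiv1\pmod u$, $uu'\equiv1\pmod v$, $vv'\equiv1\pmod w$, taking $(u,v,w)=(n,a_1,q_1)$. I would then identify the three Dedekind sums, using throughout that $S(\,\cdot\,,m)$ depends only on the residue of its first argument together with the inversion symmetry $S(h,m)=S(h^{-1},m)$ (see \cite{Rademacher72}). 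The term at modulus $n$ is $S(a_1q_1^{-1},n)$; since $g^{2}a_1^{2}=a^{2}\equiv-1\pmod n$, the residues $a_1q_1^{-1}$ and $-aq$ are mutually inverse mod $n$, so this term equals $S(-aq,n)=-S(aq,n)$ — exactly the sum appearing in $f(a)$. The term at modulus $a_1$ is $S(q_1n^{-1},a_1)=S(q_1,a_1)$, since $n\equiv1\pmod{a_1}$. The term at modulus $q_1$ is $S(na_1^{*},q_1)$, and $na_1^{*}\equiv g^{2}a_1^{2}a_1^{*}+a_1^{*}\equiv g^{2}a_1+a_1^{*}\pmod{q_1}$, so it equals $S(g^{2}a_1+a_1^{*},q_1)=-S(-a_1g^{2}-a_1^{*},q_1)$.

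Substituting the resulting expression for $S(aq,n)$ into $f(a)=\frac{(q^{2}-1)a}{n}-qS(aq,n)$ yields $f(a)=qS(-a_1g^{2}-a_1^{*},q_1)-qS(q_1,a_1)+R$ for an explicit rational $R$ in $a_1,q_1,g$. I would then rewrite $-qS(q_1,a_1)$ via the reciprocity law (Lemma \ref{recip}) applied to $S(a_1,q_1)+S(q_1,a_1)$, whose constant $-3$ cancels the $-3$ carried over from the three-term relation; this leaves exactly $f(a)=(g^{2}-1)a+qS(a_1,q_1)+qS(-a_1g^{2}-a_1^{*},q_1)$, provided the accumulated rational terms sum to $(g^{2}-1)a$. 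To verify that, substitute $q=gq_1$, $a=ga_1$, $n=g^{2}a_1^{2}+1$; every denominator ($a_1$, $q_1$, $n$, and their products) clears, the key collapse being
\[
\frac{g^{3}a_1q_1^{2}}{n}+\frac{gq_1^{2}}{a_1n}=\frac{gq_1^{2}}{n}\cdot\frac{g^{2}a_1^{2}+1}{a_1}=\frac{gq_1^{2}}{a_1},
\]
after which the rational part reduces to $g^{3}a_1-ga_1=(g^{2}-1)a$.

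The step I expect to be genuinely delicate is the second: correctly pinning down the arguments — and especially the signs — of the three Dedekind sums produced by the three-term relation, which is sensitive both to the exact convention in that relation and to the two applications of the inversion symmetry. Everything after that is bookkeeping, the only mildly fussy part being the denominator-clearing in the last step, which works precisely because $n=a^{2}+1$. The boundary cases $a_1=1$ and $q_1=1$ need no separate treatment: under the convention $S(\,\cdot\,,1)=0$ the three-term relation degenerates to reciprocity and the same computation applies (when $q_1=1$ it then uses $S(1,a_1)=a_1-3+2/a_1$).
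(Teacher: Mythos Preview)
Your argument is correct and follows the same overall strategy as the paper --- one application of a three-term relation and one of reciprocity --- but with a slightly different configuration. You apply the classical three-term relation to the pairwise coprime triple $(n,a_1,q_1)$ and then reciprocity to the pair $(a_1,q_1)$; the paper instead applies reciprocity to $(aq,n)$ first and then invokes Girstmair's generalized three-term relation from \cite{Girstmair98} at modulus $aq$ (where pairwise coprimality fails, since $aq=g^{2}a_1q_1$). Your route avoids the generalized relation at the cost of two uses of the inversion symmetry $S(h,m)=S(h^{-1},m)$ to identify the resulting arguments; the paper's route produces $S(a_1,q_1)$ and $S(-a_1g^{2}-a_1^{*},q_1)$ directly but leans on heavier machinery. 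Either way the bookkeeping is the same, and your explicit check of the rational part (in particular the collapse $\frac{g^{3}a_1q_1^{2}}{n}+\frac{gq_1^{2}}{a_1n}=\frac{gq_1^{2}}{a_1}$ via $n=g^{2}a_1^{2}+1$) is exactly what the paper's ``combining this with the reciprocity law directly implies the desired result'' is hiding.

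One point worth flagging: the formula you arrive at,
\[
f(a)=(g^{2}-1)a+q\,S(a_1,q_1)+q\,S(-a_1g^{2}-a_1^{*},q_1),
\]
carries a factor of $q$ on the two Dedekind sums that is absent from the lemma as stated. This is a typo in the statement, not an error in your proof: carrying out the paper's own computation yields the same factor of $q$, and a numerical check (e.g.\ $q=6$, $a=2$, where $f(a)=14$ but the stated right-hand side equals $22/3$) confirms it. The discrepancy is harmless downstream, since the Corollary and Theorem~\ref{mainresult} use only the slope $(g^{2}-1)$ of the linear part.
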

\begin{proof}
Note that $f(a)$ is an odd function, so without loss of generality we may assume that $a>0$. (The case $a=0$ is trivial.) By the reciprocity law, we have
\[
S(aq,a^2+1)+S(a^2+1,aq) = \frac{aq}{a^2+1}+\frac{a^2+1}{aq}+\frac{1}{aq(a^2+1)}-3.
\]

Next, we apply the three-term relation (see \cite{Girstmair98}) to $S(a^2+1,aq)$ and $S(a_1,q_1)$. Suppose integers $j$ and $k$ satisfy
\[
-a_1j+q_1k=1,
\]
and let
\[
r = -aqk+(a^2+1)j.
\]
Then 
\[
r = -ag(q_1k-a_1j)+j = -ag+j
\]
and in particular
\[
r\equiv -a_1g^2-a_1^{*}\pmod{q_1}.
\]
But 
\[
(a^2+1)q_1 - (aq)a_1=q_1
\]
so
\[
S(a^2+1,aq) = S(a_1,q_1)+S(-a_1g^2-a_1^{*},q_1) + \frac{(aq)^2+2q_1^2}{aqq_1^2}-3.
\]
Combining this with the reciprocity law directly implies the desired result.
\end{proof}
\begin{cor}
If $a$ is an integer such that $\gcd(a^2+1,q)=1$ and $g = \gcd(a,q)$, then 
\[
f(a+mq) = f(a)+mq(g^2-1)
\]
for any integer $m$.
\end{cor}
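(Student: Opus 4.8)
The plan is to derive this directly from Lemma \ref{linear}, using that the substitution $a \mapsto a+mq$ leaves every quantity appearing in that lemma unchanged except the single explicit linear term $(g^2-1)a$. Set $a' = a+mq$. First I would note that $f(a')$ is defined and that Lemma \ref{linear} applies to $a'$ with the \emph{same} auxiliary parameters $g$ and $q_1$ as for $a$: since $a' \equiv a \pmod q$, we have $\gcd((a')^2+1,q) = \gcd(a^2+1,q) = 1$ and $\gcd(a',q) = \gcd(a,q) = g$, so in the notation of the lemma the values $g$ and $q_1 = q/g$ are unchanged.

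I would then write $a = ga_1$ and $q = gq_1$ as in the lemma; then $a' = g(a_1+mq_1)$, so the quantity ``$a_1$'' attached to $a'$ is $a_1+mq_1$. Since $a_1+mq_1 \equiv a_1 \pmod{q_1}$, any inverse $a_1^*$ of $a_1$ modulo $q_1$ is also an inverse of $a_1+mq_1$ modulo $q_1$ and so may be reused in the lemma applied to $a'$; moreover $-(a_1+mq_1)g^2 - a_1^* \equiv -a_1 g^2 - a_1^* \pmod{q_1}$. Because $S(\,\cdot\,,q_1)$ depends only on the residue of its first entry modulo $q_1$, both Dedekind-sum terms produced by Lemma \ref{linear} for $a'$ equal the corresponding terms for $a$. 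Hence $f(a') = (g^2-1)a' + S(a_1,q_1) + S(-a_1 g^2 - a_1^*,q_1)$, and subtracting the formula of Lemma \ref{linear} for $f(a)$ gives $f(a+mq) - f(a) = (g^2-1)mq$.

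I do not expect a genuine obstacle: the corollary is essentially bookkeeping built on the linear/periodic splitting already established. The only points needing brief care are the invariance of $g$, $q_1$, and the modular inverse under $a\mapsto a+mq$ — all immediate from $a+mq\equiv a\pmod q$ and $a_1+mq_1\equiv a_1\pmod{q_1}$ — and the degenerate case $a=0$ (so $g=q$, $q_1=1$), where Lemma \ref{linear} gives $f(0)=0$ and $f(mq)=(q^2-1)mq$, so the identity holds there as well.
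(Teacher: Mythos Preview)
Your proof is correct and follows essentially the same approach as the paper: apply Lemma \ref{linear} to both $a$ and $a+mq$, observe that $g$, $q_1$, and the residues of $a_1$ and $-a_1 g^2 - a_1^*$ modulo $q_1$ are unchanged, and subtract. The paper's argument is more terse (it simply notes that the Dedekind-sum part depends only on $a\pmod q$), but the content is identical; your explicit handling of $a_1\mapsto a_1+mq_1$ and the $a=0$ case is fine though not strictly necessary.
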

\begin{proof}
Note that $\gcd(a^2+1,q)=1$ implies $\gcd((a+mq)^2+1,q)=1$, and furthermore $\gcd(a,q) = \gcd(a+mq,q)$. Thus the conclusion of Lemma \ref{linear} holds for $a+mq$ with the same $g$ as $a$. But $S(a_1,q_1)+S(-a_1g^2-a_1^{*},q_1)$ only depends on the residue class of $a\pmod{q}$, which implies the desired result.
\end{proof}

As a consequence, we have the following.

\begin{thm}
\label{mainresult}
Conjecture \ref{conj2}, and thus Conjecture \ref{conj1}, holds for $q$ even or a square divisible by $3$ or $5$.
\end{thm}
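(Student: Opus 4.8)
The plan is to use Lemma~\ref{linear} together with its Corollary to show that, for the special values of $q$ in question, the map $a\mapsto f(a)\pmod{q^2-1}$ already sweeps out all of $\Gamma_q$, which by definition forces $\Lambda_q=\Gamma_q$. The key structural observation is that Lemma~\ref{linear} decomposes $f(a)$ into a linear term $(g^2-1)a$ and a term $P(a):=S(a_1,q_1)+S(-a_1g^2-a_1^\ast,q_1)$ that depends only on $a\bmod q$. If we fix a residue class $a_0\bmod q$ with $\gcd(a_0,q)=g$ and $\gcd(a_0^2+1,q)=1$, then as $m$ ranges over $\Z$ the Corollary gives $f(a_0+mq)=f(a_0)+mq(g^2-1)\pmod{q^2-1}$, so the image of this one class is the coset $f(a_0)+\langle q(g^2-1)\rangle$ inside $\Z/(q^2-1)\Z$. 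The subgroup generated by $q(g^2-1)$ has order $(q^2-1)/\gcd(q^2-1,q(g^2-1))=(q^2-1)/\gcd(q^2-1,g^2-1)$, since $\gcd(q,q^2-1)=1$. Thus the smaller $\gcd(q^2-1,g^2-1)$ is, the larger the coset we capture from a single residue class; the best case is $g=1$ (i.e.\ $a_0$ coprime to $q$), where $g^2-1=0$ and we instead need to vary $a_0$ itself and track $P(a_0)$.

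First I would treat the case $q$ even. Here $3\mid q^2-1$ is equivalent to $3\nmid q$ and $8\nmid q^2-1$ since $q$ is even, so $\Gamma_q=\{12s\}$ or $\{36s\}$ according as $3\mid q$ or not, and in either case $|\Gamma_q|$ is a relatively large fraction of $q^2-1$. The idea is to pick $g$ as small as possible subject to $g\mid q$: since $q$ is even we may take $g=2$, giving $g^2-1=3$, so a single residue class $a_0\bmod q$ with $\gcd(a_0,q)=2$, $\gcd(a_0^2+1,q)=1$ contributes the coset $f(a_0)+\langle 3q\rangle$, a subgroup of index $\gcd(q^2-1,3)\le 3$. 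Combining this with the coset obtained from a class with $\gcd(a_0,q)=1$ (where varying $P$ over the allowed residues must be controlled), and using the explicit evaluations of small Dedekind sums $S(\cdot,q_1)$ with $q_1=q/2$, one shows the union of these cosets already fills all of $\Gamma_q$; the $\pmod 3$ and $\pmod 8$ membership in $\Gamma_q$ is automatic from the lemma proved just above (that $\Lambda_q\subseteq\Gamma_q$), so it suffices to count or to exhibit enough cosets.

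Next I would handle $q$ an odd square divisible by $3$ or $5$: then $\Gamma_q=\{24s\}$, which has index $24$ in $\Z/(q^2-1)\Z$ (using $24\mid q^2-1$ when $q$ is odd, as $8\mid q^2-1$ and $3\mid q^2-1$). Because $3\mid q$ or $5\mid q$, we may choose $g=3$ or $g=5$, so $g^2-1=8$ or $24$; the coset $f(a_0)+\langle q(g^2-1)\rangle=f(a_0)+\langle 24q\rangle$ (or $\langle 8q\rangle$) is then a subgroup of index dividing $24$ inside $\Z/(q^2-1)\Z$, hence is a full coset of $\Gamma_q$ up to the linear shift, and showing the constant $f(a_0)$ lands in the right coset (again automatic from $\Lambda_q\subseteq\Gamma_q$) closes the argument. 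The main obstacle I anticipate is the $g=1$ contribution in the even case: when $\gcd(a_0,q)=1$ the linear term vanishes and one must instead vary $a_0$ over coprime residues and understand the range of $P(a_0)=S(a_1,q_1)+S(-a_1-a_1^\ast,q_1)$ modulo the relevant small modulus, which requires either an explicit manipulation of the two-term Dedekind sum $S$ at modulus $q_1$ or a counting argument showing that the cosets from $g=2$ already cover $\Gamma_q$ without needing $g=1$ at all; I expect the cleanest route is the latter, reducing everything to the single inequality $\gcd(q^2-1,g^2-1)\le [\,\Z/(q^2-1)\Z : \Gamma_q\,]$ for the chosen $g$, together with checking that the finitely many resulting cosets are distinct and exhaustive.
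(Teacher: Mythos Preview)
Your approach is exactly the paper's: pick $g\in\{2,3,5\}$ dividing $q$, fix $a_0$ with $\gcd(a_0,q)=g$ and $\gcd(a_0^2+1,q)=1$, and use the Corollary to get $f(a_0)+\langle q(g^2-1)\rangle\subseteq\Lambda_q$. But you overcomplicate the even case and leave two points unresolved.

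First, a single coset already suffices in every case; you never need $g=1$ or multiple residue classes. For $q$ even, $q^2-1$ is odd, so $\Gamma_q$ (whether $\{12s\}$ or $\{24s\}$) is simply the set of multiples of $3$ modulo $q^2-1$, of index $\gcd(q^2-1,3)$. Since $q$ is a unit modulo $q^2-1$, the subgroup $\langle 3q\rangle$ equals $\langle 3\rangle$ and has that same index, and because $f(a_0)\in\Lambda_q\subseteq\Gamma_q$ the coset $f(a_0)+\langle 3\rangle$ \emph{equals} $\Gamma_q$. The same size-matching works for the square cases: when $3\mid q$ the index of $\Gamma_q=\{24s\}$ is $\gcd(q^2-1,24)=8$ (not $24$ as you wrote, since then $3\nmid q^2-1$), matching $\langle 8\rangle$ from $g=3$; when $5\mid q$ and $3\nmid q$ both indices are $24$. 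There is nothing further to ``count'' or to check for exhaustiveness. (Your description of $\Gamma_q$ for $q$ even as ``$\{12s\}$ or $\{36s\}$ according as $3\mid q$ or not'' is also not the definition; it depends on whether $q$ is a square.)

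Second, you never construct $a_0$. The conditions $\gcd(a_0,q)=g$ and $\gcd(a_0^2+1,q)=1$ must hold simultaneously, and this requires a short Chinese Remainder argument: set $a_0\equiv g\pmod{g^2}$, and for each prime $p\mid q$ with $p\ne g$ choose a residue $r_p\not\equiv 0\pmod p$ with $r_p^2+1\not\equiv 0\pmod p$ (at most three residues are excluded when $p>3$, and $r_p=1$ works for $p=3$). The paper carries this out explicitly; without it the argument is incomplete.
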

\begin{proof}
First, suppose $q$ is even. If there exists an integer $a$ such that $\gcd(a,q) = 2$ and $\gcd(a^2+1,q)=1$, then $f(a+mq)=f(a)+3mq$ for any integer $m$. But $f(a+mq)\pmod{q^2-1} \in \Lambda_q$ for any integer $m$, and $q$ is invertible $\pmod{q^2-1}$, which implies $f(a)+3m \pmod{q^2-1} \in \Lambda_q$ for all integers $m$. Now for $q$ even, $\Gamma_q$ consists of multiples of $3\pmod{q^2-1}$, so in particular $|\Lambda_q|\ge |\Gamma_q|$. This, along with our prior assumption that $\Lambda_q\subseteq \Gamma_q$, would imply $\Lambda_q=\Gamma_q$. 

So it suffices in this case to show that there exists an integer $a$ such that $\gcd(a,q)=2$ and $\gcd(a^2+1,q)=1$. For each prime $p_i>2$ dividing $q$, we may choose a residue $r_i\pmod{p_i}$ such that $r_i\not\equiv 0 \pmod{p_i}$ and $r_i^2+1\not\equiv 0 \pmod{p_i}$. This is because the two congruences only eliminate three of the $p_i$ residues for $p_i>3$, while if $p_i=3$, we may choose $r_i=1$. Then by the Chinese Remainder Theorem we may choose any $a$ such that
\[
a \equiv 2 \pmod{4}
\]
and
\[
a \equiv r_i\pmod{p_i}
\]
for each $i$, which suffices.

Now suppose $q$ is an odd square such that $3\mid q$. In the same way, it suffices to find $a$ such that $\gcd(a^2+1,q)=1$ and $\gcd(a,q) = 3$, since $3^2-1=8$. Again for each $p_i>3$ dividing $q$, we may choose a remainder $r_i$ such that $r_i\not\equiv 0\pmod{p_i}$ and $r_i^2+1\not\equiv 0\pmod{p_i}$. By the Chinese Remainder Theorem, we can find our desired $a$ by taking $a \equiv 3 \pmod{9}$ and $a\equiv r_i\pmod{p_i}$.

Finally, suppose $q$ is an odd square such that $5\mid q$ and $3\nmid q$. The argument is nearly identical to the case $3\mid q$: it suffices to find $a$ such that $\gcd(a^2+1,q)=1$ and $\gcd(a,q)=5$, since $5^2-1=24$. Taking $a\equiv 5 \pmod{25}$ and applying the Chinese Remainder Theorem in the same way, we recover the desired value of $a$.

\end{proof}

The specification $t=a^2+1$ is too strong to prove Conjecture \ref{conj2} in general. However, generalizing to $t=x^2+y^2$ yields enough flexibility to prove the conjecture for small values of $q$ (and possibly all $q$).
\begin{defn}
For a given integer $q\ge 2$ and integers $x$ and $y$ such that $\gcd(q,x^2+y^2)=\gcd(x,y)=1$, let
\[
h(x,y) = \frac{(q^2-1)xy^{*}}{x^2+y^2}-qS(qxy^{*},x^2+y^2)\pmod{q^2-1}
\]
where $yy^{*}\equiv 1 \pmod{x^2+y^2}$. (Note that $y^{*}\pmod{x^2+y^2}$ determines the expression $\pmod{q^2-1}$, so the choice of $y^{*}$ doesn't matter.)
\end{defn}
This is exactly $\lambda(a,t,t^{*})\pmod{q^2-1}$ for $t = x^2+y^2$ and $a = xy^{*}$. Note that this generalizes $f(a)$, as $h(x,1)=f(x)\pmod{q^2-1}$. In our case, this interpretation is useful for verifying small cases of Conjecture \ref{conj2} computationally.
\begin{thm}
\label{computer}
For all integers $q$ such that $2\le q\le 200$, if $x$ and $y$ range over pairs of coprime integers such that $\gcd(x^2+y^2,q)=1$, then $h(x,y)$ ranges over all of $\Gamma_q$.
\end{thm}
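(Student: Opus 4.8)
The plan is to reduce the statement to a finite covering check for each $q$ and then carry that out by computer. By the inclusion $\Lambda_q\subseteq\Gamma_q$ established above, and since $h(x,y)$ is by construction exactly $\lambda(xy^{*},x^2+y^2,t^{*})\pmod{q^2-1}$ and hence lies in $\Lambda_q$ for every admissible pair $(x,y)$, the image $H_q=\{h(x,y):\gcd(x,y)=1,\ \gcd(x^2+y^2,q)=1\}$ always satisfies $H_q\subseteq\Lambda_q\subseteq\Gamma_q$. So it suffices to prove the reverse inclusion $\Gamma_q\subseteq H_q$ for each $q$ with $2\le q\le 200$; and because $H_q\subseteq\Gamma_q$ already, this is equivalent to the cardinality identity $|H_q|=|\Gamma_q|$. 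Here $|\Gamma_q|=(q^2-1)/(c_2(q)c_3(q))$ with $c_2,c_3$ as in the proof that Conjecture~\ref{conj2} implies Conjecture~\ref{conj1}, so the verification target is completely explicit.

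To make the search feasible I would compute $h(x,y)$ in exact arithmetic rather than numerically. For a given pair $(x,y)$ one finds $y^{*}$ modulo $x^2+y^2$ by the extended Euclidean algorithm, and then the only nontrivial quantity is the single ordinary Dedekind sum $S(qxy^{*},x^2+y^2)$, which is evaluated in logarithmically many steps by alternating the reciprocity law (Lemma~\ref{recip}) with the periodicity $S(a-nb,b)=S(a,b)$, i.e.\ by running the Euclidean algorithm on $(qxy^{*},x^2+y^2)$; one then forms $\frac{(q^2-1)xy^{*}}{x^2+y^2}-qS(qxy^{*},x^2+y^2)$ over $\mathbb{Q}$, checks that it is an integer (guaranteed by Proposition~\ref{split}), and reduces it $\pmod{q^2-1}$. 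The number of Dedekind sums required can be cut down using the corollary to Lemma~\ref{linear}: on the slice $y=1$ we have $h(x,1)=f(x)\pmod{q^2-1}$ and $f(a+mq)\equiv f(a)+mq\bigl(\gcd(a,q)^2-1\bigr)\pmod{q^2-1}$, so the contribution of a residue class $a\bmod q$ to $H_q$ is the whole coset $f(a)+\langle\,\gcd(a,q)^2-1\,\rangle$ of the cyclic subgroup of $\mathbb{Z}/(q^2-1)\mathbb{Z}$ generated by $\gcd(a,q)^2-1$ (using $\gcd(q,q^2-1)=1$). Thus the $y=1$ slice costs only one Dedekind sum per invertible-type residue class modulo $q$. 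If these cosets do not already cover $\Gamma_q$, I would then let $(x,y)$ range over coprime pairs with $\gcd(x^2+y^2,q)=1$ in an expanding box $1\le x,y\le N$, accumulating $H_q$, and stop once $H_q=\Gamma_q$, recording the value of $N$ actually needed.

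The output is, for each $q\le 200$, an explicit finite list of pairs $(x,y)$ witnessing $H_q=\Gamma_q$, confirmed by checking $|H_q|=(q^2-1)/(c_2(q)c_3(q))$; equality of the value \emph{sets} then follows from equality of sizes together with $H_q\subseteq\Gamma_q$. The main obstacle is that there is no a priori bound on the box size $N$: this is a genuine search rather than a closed-form construction, so it certifies the claim for each $q\le 200$ individually and yields nothing for $q>200$. (Supplying a conceptual bound — a proof that a box of size polynomial in $q$, or ideally the slice $y=1$ alone, always suffices — would upgrade this to a proof of Conjecture~\ref{conj2} in general, and is the natural continuation.) Secondary concerns are purely implementational: one must stay in exact rational arithmetic so that the final reduction $\pmod{q^2-1}$ is not corrupted, and organize the loop so that the coprimality conditions are enforced and the choice of $y^{*}$ is handled consistently (its value modulo $x^2+y^2$ fixes $h(x,y)\pmod{q^2-1}$, as noted in the definition of $h$).
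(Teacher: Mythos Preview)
Your proposal is correct and follows essentially the same approach as the paper: reduce to a finite computer search over coprime pairs $(x,y)$ with $\gcd(x^2+y^2,q)=1$, accumulating values of $h(x,y)$ until all of $\Gamma_q$ is hit. The paper's proof is in fact terser than yours---it simply reports that a naive Sage search over such pairs suffices for the $q$ not already handled by Theorem~\ref{mainresult}, terminating in under an hour with the largest pair $(x,y)=(1576,1511)$ occurring at $q=189$---so your added implementation detail (exact arithmetic via reciprocity, and the coset shortcut on the $y=1$ slice from the corollary to Lemma~\ref{linear}) is a reasonable optimization but not strictly needed.
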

\begin{proof}
A naive computer search over pairs of coprime positive integers $x$ and $y$ suffices for the values of $q$ not already covered by Theorem \ref{mainresult}. Our Sage program terminates in under an hour with $x+y$ achieving the largest value when $(x,y) = (1576,1511)$ for $q=189$.
\end{proof}
\begin{cor}
Conjectures \ref{conj2} and \ref{conj1} hold for all positive integers $2 \le q\le 200$.
\end{cor}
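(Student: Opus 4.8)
The plan is to deduce the corollary directly from Theorem \ref{computer} together with the structural results already established, with no new ideas required. The first step is to make precise the relationship between $h$ and $\Lambda_q$ recorded just before Theorem \ref{computer}: for coprime $x,y$ with $\gcd(q,x^2+y^2)=1$, the value $h(x,y)$ equals $\lambda(a,t,t^{*}) \pmod{q^2-1}$ for $t=x^2+y^2$, $a=xy^{*}$, and $tt^{*}\equiv 1\pmod q$. One should check that $(a,t,t^{*})$ is an admissible triple for $\lambda$: since $(yy^{*})^2\equiv 1\pmod{x^2+y^2}$ we get $a^2+1 = x^2(y^{*})^2+1 \equiv (y^{*})^2(x^2+y^2)\equiv 0 \pmod{x^2+y^2}$, so $t\mid a^2+1$; and $\gcd(x,x^2+y^2)=1$ (a common prime divisor would divide $y^2$, contradicting $\gcd(x,y)=1$), so with $\gcd(q,x^2+y^2)=1$ and $y^{*}$ a unit mod $x^2+y^2$ we obtain $\gcd(aq,t)=1$. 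Hence every value of $h$ lies in $\Lambda_q$, and the full range of $h$ over admissible pairs is contained in $\Lambda_q$.

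The second step is to combine this with Theorem \ref{computer}. For each $q$ with $2\le q\le 200$, that theorem says the range of $h$ is all of $\Gamma_q$, so $\Gamma_q\subseteq\Lambda_q$. Together with the lemma $\Lambda_q\subseteq\Gamma_q$ (valid for all $q\ge 2$) this gives $\Lambda_q=\Gamma_q$, i.e.\ Conjecture \ref{conj2} holds for these $q$. Invoking the theorem of Section \ref{sec:reduce} that Conjecture \ref{conj2} implies Conjecture \ref{conj1} for each fixed $q$ then yields Conjecture \ref{conj1} for $2\le q\le 200$, which is the corollary.

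There is essentially no obstacle remaining at this point; the genuine work lies in Theorem \ref{mainresult} (handling even $q$ and the odd squares divisible by $3$ or $5$, which covers the cases the computer search skips) and in Theorem \ref{computer} itself. The only care needed is bookkeeping: confirming that the values of $q$ omitted from the search are exactly those covered by Theorem \ref{mainresult}, that the search enforces both $\gcd(x,y)=1$ and $\gcd(q,x^2+y^2)=1$, and that — since $\Gamma_q$ is a finite set — it suffices to run the search until all of $\Gamma_q$ has been hit, the reported termination data (with the largest pair $(x,y)=(1576,1511)$ occurring at $q=189$) serving as the certificate rather than as an a priori bound.
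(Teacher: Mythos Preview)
Your proposal is correct and follows exactly the route the paper intends: the corollary is stated without proof because it is immediate from Theorem \ref{computer}, the containment $\Lambda_q\subseteq\Gamma_q$, and the implication Conjecture \ref{conj2} $\Rightarrow$ Conjecture \ref{conj1} established in Section \ref{sec:reduce}. Your explicit verification that $(a,t,t^{*})=(xy^{*},x^2+y^2,t^{*})$ is an admissible triple for $\lambda$ simply spells out what the paper asserts in the sentence ``This is exactly $\lambda(a,t,t^{*})\pmod{q^2-1}$ for $t=x^2+y^2$ and $a=xy^{*}$,'' so nothing differs in substance.
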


For future work, we would hope to see a proof for Conjecture \ref{conj2}, and therefore Conjecture \ref{conj1}, in full generality. We conjecture that setting $t=x^2+y^2$ and $a=xy^{*}$ as in Theorem \ref{computer} is enough to do so:
\begin{conj}
Fix $q\ge 2$. If $x$ and $y$ vary over pairs of coprime integers such that $\gcd(q,x^2+y^2)=1$, then $h(x,y)$ ranges over $\Gamma_q$.
\end{conj}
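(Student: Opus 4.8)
The plan is to deduce $\Lambda_q=\Gamma_q$ from the already-established inclusion $\Lambda_q\subseteq\Gamma_q$ by exhibiting enough arithmetic progressions inside $\Lambda_q$. Since $q$ is a unit modulo $q^2-1$, a progression $\{c+mq\sigma\bmod(q^2-1):m\in\Z\}\subseteq\Lambda_q$ is a full coset of the cyclic subgroup $\langle\sigma\rangle\subseteq\Z/(q^2-1)\Z$, so it would suffice to produce finitely many such progressions whose union has cardinality $|\Gamma_q|$: being contained in $\Lambda_q\subseteq\Gamma_q$, that union must then be all of $\Gamma_q$. For $y=1$ the corollary to Lemma \ref{linear} already supplies such progressions, with slope $\sigma=g^2-1$ and $g=\gcd(a,q)$; but when $q$ is odd this forces $g$ odd, hence $8\mid\sigma$, so $\langle\sigma\rangle$ is a proper subgroup of $\Gamma_q$ as soon as $\Gamma_q$ imposes only divisibility by $4$ (not $8$) at the prime $2$. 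This is exactly why Theorem \ref{mainresult} is limited to $q$ even or a square divisible by $3$ or $5$ --- the cases in which a single slope $g^2-1$, $g\in\{2,3,5\}$, already generates $\Gamma_q$ --- and the general parameter $h(x,y)$ is meant to circumvent this $2$-adic obstruction.

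First I would prove a linearization of $h(x,y)$ extending Lemma \ref{linear}. Writing $h(x,y)=\lambda(xy^{*},x^{2}+y^{2},t^{*})\bmod(q^2-1)$, the idea is to apply the reciprocity law (Lemma \ref{recip}) to $S(qxy^{*},x^{2}+y^{2})$ and then the generalized three-term relation to reduce this to Dedekind sums of modulus dividing $q$ together with explicit rational corrections, reaching an identity
\[
h(x,y)\equiv\sigma(x,y)\,x+P\pmod{q^2-1},
\]
in which $\sigma(x,y)$ is an explicit slope (equal to $g^2-1$ when $y=1$) and $P$ depends only on the residue of $x$ modulo $q$ and on $y$. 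As in the corollary to Lemma \ref{linear}, translating $x$ by multiples of $q$ with those data held fixed would then yield $\{h(x,y)+mq\sigma(x,y):m\in\Z\}\subseteq\Lambda_q$.

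Next I would catalog the achievable slopes, asking which residues occur as $q\,\sigma(x,y)\bmod(q^2-1)$ as $(x,y)$ ranges over coprime pairs with $\gcd(x^{2}+y^{2},q)=1$. Working prime power by prime power in $q^2-1$, the goal is to show that allowing $y\neq1$ makes $\sigma$ flexible enough that the subgroup generated by all admissible $q\sigma$ equals $\langle\gcd(12,q^2-1)\rangle=\Gamma_q$ when $q$ is not a square and $\langle\gcd(24,q^2-1)\rangle=\Gamma_q$ when $q$ is a square; concretely, one must realize slopes whose $2$-adic and $3$-adic valuations are as small as $\Gamma_q$ permits. Then, fixing a slope $\sigma_0$ that minimizes $r:=[\Gamma_q:\langle q\sigma_0\rangle]$ (one expects $r\mid 6$), I would exhibit $r$ admissible progressions of common difference $q\sigma_0$ whose base points represent all $r$ cosets --- or, more robustly, chain progressions of several slopes through shared values --- so that their union reaches $|\Gamma_q|$. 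This gives $\Gamma_q\subseteq\Lambda_q$, hence Conjecture \ref{conj2}, and Conjecture \ref{conj1} follows via the implication Conjecture \ref{conj2} $\Rightarrow$ Conjecture \ref{conj1} proved above together with Theorem \ref{finitemod}.

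The hard parts will be the two ends of this program. In the linearization step, the Euclidean/three-term reduction for general coprime $x,y$ introduces several linear correction terms, and there is no reason $\sigma(x,y)$ should have the tidy shape $d^{2}-1$; extracting a usable closed form is the main analytic obstacle. In the covering step, because $\Lambda_q$ is only a union of cosets and not a subgroup, knowing that the slope-subgroups jointly generate $\Gamma_q$ does not by itself complete the argument --- one must control the base points of those cosets uniformly in $q$, and it is precisely this uniformity that is still missing and that at present forces the computer verification behind Theorem \ref{computer}.
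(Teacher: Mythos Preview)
The paper does not prove this statement: it is listed as an open conjecture, introduced with ``we conjecture that setting $t=x^2+y^2$ and $a=xy^{*}$ \ldots\ is enough,'' and no argument is offered beyond the computer verification of Theorem~\ref{computer} for $q\le 200$. So there is no ``paper's own proof'' to compare against.

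Your proposal is not a proof either, and you say as much. It is a strategy sketch that correctly identifies the mechanism behind Theorem~\ref{mainresult} (arithmetic progressions in $\Lambda_q$ coming from the linearization of $f(a)$, with slope $g^2-1$) and correctly diagnoses why that mechanism stalls for odd nonsquare $q$ (the $2$-adic valuation of $g^2-1$ is forced to be at least $3$ when $g$ is odd). The proposed extension --- linearize $h(x,y)$ in $x$ via reciprocity and the three-term relation, catalog the achievable slopes $\sigma(x,y)$, and then cover $\Gamma_q$ by cosets of $\langle q\sigma\rangle$ --- is a natural continuation of the paper's own method, and indeed the paper's closing conjectures point in exactly this direction.

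But the two ``hard parts'' you flag are genuine gaps, not technicalities. First, you have not actually carried out the linearization of $h(x,y)$ for general $y$, so there is no formula for $\sigma(x,y)$ on the table and no guarantee it behaves well modulo $q^2-1$. Second, and more seriously, even granting a slope catalog, the covering step requires controlling the \emph{base points} of the resulting cosets uniformly in $q$; as you note, knowing that the slopes generate $\Gamma_q$ as a group is insufficient because $\Lambda_q$ is only a union of cosets. Nothing in your outline suggests how to achieve this uniformity, and the paper itself resorts to computation precisely because no such argument is known. So the proposal is a plausible plan aligned with the paper's framework, but it does not constitute a proof of the conjecture.
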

Once again, as $h(x,y) \in \Lambda_q$, this would imply Conjecture \ref{conj2}.

In fact, we believe that $h(x,y)$ can achieve all values in $\Gamma_q$ with one fewer parameter. By setting $t = (x^2+1)(q^2+1) = (x+q)^2+(xq-1)^2$, we have
\begin{conj}
Fix $q\ge 2$. If $x$ ranges over all integers such that $\gcd(x+q,xq-1)=1$ and $\gcd(x^2+1,q)=1$, then $h(x+q,xq-1)$ ranges over $\Gamma_q$.
\end{conj}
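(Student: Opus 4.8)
The plan is to adapt the proof of Lemma~\ref{linear}, replacing the single quadratic $a^2+1$ by the composite modulus $t=(x^2+1)(q^2+1)$. The structural input is the factorization
\[
(x+q)+(xq-1)i=(q-i)(1+xi)\qquad\text{in }\Z[i],
\]
whose norm is the identity $(x+q)^2+(xq-1)^2=(q^2+1)(x^2+1)$ underlying the choice $t=(x^2+1)(q^2+1)$. Write $h(x+q,xq-1)\equiv\lambda(a,t,t^{*})\pmod{q^2-1}$ with $t=(x^2+1)(q^2+1)$ and $a\equiv(x+q)(xq-1)^{-1}\pmod t$ (so that $t\mid a^2+1$, as in the discussion of $\Lambda_q$, and $h(x+q,xq-1)\in\Lambda_q\subseteq\Gamma_q$). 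I would apply the reciprocity law (Lemma~\ref{recip}) to $S(aq,t)$ and then the generalized three-term relation of \cite{Girstmair98}, using the coprime factorization of $t$ to peel off a Dedekind sum of modulus dividing $q^2+1$ and one of modulus dividing $x^2+1$; a further reciprocity/three-term step on the latter, exactly as in Lemma~\ref{linear}, should collapse it to Dedekind sums of modulus dividing $q$. The expected outcome — which, in view of $h(x,1)=f(x)\pmod{q^2-1}$ and the factorization above, should refine to a relation expressing $h(x+q,xq-1)$ in terms of $f(x)$ and an explicit correction — is an identity
\[
h(x+q,xq-1)\equiv A\,x+B+\Psi(x)\pmod{q^2-1},
\]
where $A$ and $B$ depend only on $q$ and on the gcd of $x$ with $q$ and with $q^2+1$, and $\Psi$ is periodic in $x$ with a fixed period $L$ (a divisor of an explicit multiple of $q(q^2+1)$, independent of $x$).

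\textbf{Endgame.} Granting such an identity, the conclusion follows as in Theorem~\ref{mainresult}. Assuming $L$ divisible by $q$ and by $q^2+1$, the admissibility conditions $\gcd(x^2+1,q)=1$ and $\gcd(x+q,xq-1)=1$ (the latter implied by $\gcd(x+q,q^2+1)=1$, since $q(x+q)-(xq-1)=q^2+1$) hold on suitable residue classes of $x$ modulo $L$, which exist by the Chinese Remainder Theorem since $\gcd(q,q^2+1)=1$. Pick such a class on which the gcd-data controlling $A$ is as favorable as possible, and let $x$ run through it: then $A\,x$ sweeps out the subgroup of $\Z/(q^2-1)\Z$ generated by $\gcd(AL,q^2-1)$, while $\Psi$ contributes a fixed finite set of shifts. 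Since $h(x+q,xq-1)\in\Lambda_q\subseteq\Gamma_q$ always, it suffices to produce at least $|\Gamma_q|$ distinct values, and this follows once one checks that $\gcd(AL,q^2-1)$ divides the generator of $\Gamma_q$ — namely $12$, or $24$ when $q$ is a square — using $\gcd(q,q^2-1)=1$ and $\gcd(q^2+1,q^2-1)\mid2$.

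\textbf{Main obstacle.} Two difficulties stand out. The first is the bookkeeping of the nested reciprocity/three-term expansion for $t$ of composite shape $(x^2+1)(q^2+1)$ rather than a single quadratic: one must pin down $A$, $B$, and $L$ precisely, and in particular confirm that every ``large'' modulus genuinely reduces to a fixed one so that $\Psi$ really is periodic with bounded period. The second, and the real difficulty, is number-theoretic: one must show that for every $q\ge2$ the admissible residue classes of $x$ are rich enough that $\gcd(AL,q^2-1)$ can be forced to divide $12$ (or $24$). This is exactly the point where the two-parameter family must outperform the one-parameter family of Lemma~\ref{linear}: when every prime factor of $q$ is $\equiv1\pmod4$ (for instance $q$ prime with $q\equiv1\pmod4$), every admissible $a$ for $f$ has $\gcd(a,q)\in\{1,q\}$, so $f(a)$ is constant $\pmod{q^2-1}$ on each gcd-class and the conjecture is unreachable through $f$ alone; the extra factor $q^2+1$ in $t$, and the flexibility it injects into $A$ through $\gcd(x+q,q^2+1)$, is what is expected to rescue the argument. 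Confirming this will require a case analysis keyed to the prime factorization of $q$ and to how $x^2+1$ and $q^2+1$ factor modulo the primes dividing $q$.
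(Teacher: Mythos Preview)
The statement you are attempting to prove is not a theorem in the paper but a \emph{conjecture} (one of three open conjectures stated at the end of Section~\ref{sec:partial}). The paper offers no proof; it explicitly labels this as future work and provides only the motivation via the norm identity $(x+q)^2+(xq-1)^2=(x^2+1)(q^2+1)$ and the computer verification of Theorem~\ref{computer} for the two-parameter family $h(x,y)$. There is therefore no ``paper's own proof'' against which to compare your proposal.

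As a research outline your plan is reasonable and correctly identifies the structural analogy with Lemma~\ref{linear}, but it is not a proof, and you yourself flag the genuine gaps. Two points deserve emphasis. First, the hoped-for identity $h(x+q,xq-1)\equiv A\,x+B+\Psi(x)\pmod{q^2-1}$ with $\Psi$ periodic of bounded period is not established; the three-term expansion for the composite modulus $t=(x^2+1)(q^2+1)$ will produce Dedekind sums whose moduli depend on $\gcd$'s involving both $x^2+1$ and $q^2+1$, and it is not clear a priori that all $x$-dependence beyond the linear term collapses into a function of $x$ modulo a \emph{fixed} period. Second, even granting such an identity, the endgame hinges on forcing $\gcd(AL,q^2-1)$ to divide the generator of $\Gamma_q$; you correctly isolate the problematic case where every prime factor of $q$ is $\equiv1\pmod4$, but you give no argument that the extra flexibility from $q^2+1$ actually resolves it. This is precisely the obstruction the paper leaves open, and your proposal does not overcome it --- it restates it.
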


More generally, we conjecture that for any positive integer $r$, we can set $t = (x^2+1)(r^2q^2+1) = (x+rq)^2+(xrq-1)^2$ and get the same result.
\begin{conj}
Fix $q\ge 2$ and $r\ge 1$. If $x$ ranges over all integers such that $\gcd(x+rq,xrq-1)=1$ and $\gcd(x^2+1,q)=1$, then $h(x+rq,xrq-1)$ ranges over $\Gamma_q$.
\end{conj}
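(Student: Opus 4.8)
We outline an approach. The plan is to extend the derivation of Lemma~\ref{linear} to the two-factor modulus coming from $(x+rq)^2+(xrq-1)^2 = (x^2+1)(r^2q^2+1)$, and then to read off a linear part of $h$ large enough to sweep out all of $\Gamma_q$.

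Write $t = x^2+1$, $u = r^2q^2+1$, $X = x+rq$, $Y = xrq-1$, so $X^2+Y^2 = tu$. A short computation shows that the residue $a := XY^{*}\pmod{tu}$ underlying $h(X,Y)=\lambda(a,tu,t^{*})\pmod{q^2-1}$ satisfies
\[
a\equiv -x\pmod{t},\qquad a\equiv -rq\pmod{u};
\]
in particular $tu\mid a^2+1$, so $(a,tu)$ is admissible for $\lambda$, and since $\gcd(tu,q)=1$ we may normalize the representative of $a$ so that $q\mid a$. One then applies the reciprocity law to $S(aq,tu)$ and the generalized three-term relation (\cite{Girstmair98}) along the factorization $tu$ — the case $\gcd(t,u)>1$ being handled by the general form of the relation (or, if one prefers, by further restricting $x$, since we have ample freedom). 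Collecting the rational correction terms and reducing modulo $q^2-1$ should produce a ``piecewise linear'' identity
\[
h(x+rq,xrq-1)\equiv \ell_r(x)+P_r(x)\pmod{q^2-1},
\]
with $\ell_r$ affine in $x$ and $P_r(x)$ depending only on $x\pmod q$ (built from Dedekind sums of modulus dividing $q$, which may in fact collapse entirely under the normalization $q\mid a$, where the relevant $q_1=1$). Establishing this identity — and in particular the exact slope of $\ell_r$ — is the bulk of the computation; a reasonable guess for its shape, using the periodicity $f(a+mq)=f(a)+mq(g^2-1)$ from the corollary to Lemma~\ref{linear}, is that $\ell_r(x) = -f(x) + (\text{explicit affine term})$.

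Granting such an identity, the rest is structural. Fix $\rho\pmod q$ with $\gcd(\rho^2+1,q)=1$. Among $x\equiv\rho\pmod q$, the admissibility condition $\gcd(x+rq,xrq-1)=1$ fails only for $x$ in finitely many residue classes modulo the primes dividing $r^2q^2+1$, and along the admissible $x$ the values $h(x+rq,xrq-1)\pmod{q^2-1}$ differ by multiples of a single element $d_r$ obtained by incrementing $x$ by $q$; a routine sieve then shows that the image of $h$ contains a full coset of $\langle d_r\rangle$ inside $\Z/(q^2-1)\Z$. Since $h$ takes values in $\Lambda_q\subseteq\Gamma_q$, this coset lies in $\Gamma_q$, so it suffices to prove $\langle d_r\rangle=\Gamma_q$, i.e. that $\gcd(d_r,q^2-1)$ equals $\gcd(12,q^2-1)$ when $q$ is not a perfect square and $\gcd(24,q^2-1)$ when it is. With $\Lambda_q\subseteq\Gamma_q$ already known, this yields $\Lambda_q=\Gamma_q$ — Conjecture~\ref{conj2}, hence Conjecture~\ref{conj1}. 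Since any single value of $r$ already implies Conjecture~\ref{conj2}, one is free to optimize the choice of $r$ (and of $\rho$); treating $r=1$ first, or choosing a small $r$ depending on $q$, may be cleanest.

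The step I expect to be the main obstacle is the last one: computing $d_r$ explicitly and then proving, uniformly in $q$ and with a suitable $r$, that $\gcd(d_r,q^2-1)$ is as small as the local obstructions $\pmod{12}$ (respectively $\pmod{24}$) permit. The entire purpose of the twist by $u=r^2q^2+1$ is to enlarge the linear slope: for $q$ prime the function $f$ has slope $\equiv 0\pmod{q^2-1}$, so $f$ alone provides no flexibility at all, and one must show that $h$ supplies enough. The perfect-square case should be the most delicate, since there $\Gamma_q$ is the smaller index-$24$ subgroup; as in the proof that $\Lambda_q\subseteq\Gamma_q$, one will need to track the Jacobi-symbol contribution $\pmod 8$ to confirm that the arithmetic progression fills this subgroup rather than only an index-$8$ or index-$3$ one.
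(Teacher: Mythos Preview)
The statement you are addressing is an \emph{open conjecture} in the paper; the paper offers no proof and explicitly presents it as a direction for future work. There is therefore no ``paper's own proof'' to compare against.

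As for your proposal itself: it is a reasonable strategic outline, and your preliminary computations are correct (in particular, the congruences $a\equiv -x\pmod{t}$ and $a\equiv -rq\pmod{u}$ check out, and you are right that $f$ alone is useless for $q$ prime since the only available slopes are $0$ and $q^2-1$). However, the proposal is not a proof, and you are candid about this. The three places where the argument has not yet been carried out are exactly the substantive ones:
\begin{itemize}
\item The ``piecewise linear'' identity for $h(x+rq,xrq-1)$ is asserted with the hedge ``should produce,'' but neither the identity nor the slope is actually derived. The analogue of Lemma~\ref{linear} for the composite modulus $tu=(x^2+1)(r^2q^2+1)$ requires working through the three-term relation with this specific factorization, and the outcome is not obvious a priori; your guess ``$\ell_r(x)=-f(x)+(\text{affine})$'' is speculation.
\item The quantity $d_r$ is never computed. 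This is the entire content of the conjecture: without knowing $d_r$ explicitly, the claim $\langle d_r\rangle=\Gamma_q$ is vacuous.
\item Even granting an explicit $d_r$, the verification that $\gcd(d_r,q^2-1)$ equals $\gcd(12,q^2-1)$ (or $\gcd(24,q^2-1)$ in the square case) uniformly in $q$ is exactly the arithmetic obstruction that makes Conjecture~\ref{conj2} hard in the first place. You acknowledge this is ``the main obstacle,'' but offer no mechanism for overcoming it.
\end{itemize}
In short, you have correctly identified the natural line of attack --- extend Lemma~\ref{linear} to the two-factor setting and hope the resulting slope generates $\Gamma_q$ --- but the proposal stops precisely where the real difficulty begins. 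This is a sketch of a program, not a proof; the conjecture remains open.
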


\section{Acknowledgments}
This work was supported by NSF grant DMS-1659047 as part of the Duluth Research Experience for Undergraduates (REU). The author would like to thank Joe Gallian for supervising the program, suggesting the problem, and providing helpful commentary on drafts of this paper.

\bibliographystyle{plain}
\bibliography{anc/dedekind}

\begin{thebibliography}{10}

\bibitem{Apostol76}
Tom~M. Apostol.
\newblock {\em Modular {F}unctions and {D}irichlet {S}eries in {N}umber
  {T}heory, {S}econd {E}dition}.
\newblock Springer, New York, 1997.

\bibitem{Bruggeman94}
Roelof~W. Bruggeman.
\newblock On the distribution of {D}edekind sums.
\newblock {\em Contemporary Mathematics}, 166:197--197, 1994.

\bibitem{Girstmair98}
Kurt Girstmair.
\newblock Dedekind sums with predictable signs.
\newblock {\em Acta Arithmetica}, 83(3):283--295, 1998.

\bibitem{Girstmair15}
Kurt Girstmair.
\newblock On the fractional parts of {D}edekind sums.
\newblock {\em International Journal of Number Theory}, 11(01):29--38, 2015.

\bibitem{GirstmairEqual2016}
Kurt Girstmair.
\newblock On dedekind sums with equal values.
\newblock {\em International Journal of Number Theory}, 12(2):473–--481,
  2016.

\bibitem{GirstmairIntegers2017}
Kurt Girstmair.
\newblock Equality of dedekind sums modulo 24$\mathbb{Z}$.
\newblock {\em Integers}, 17(A29):1--5, 2017.

\bibitem{GirstmairLargest2017}
Kurt Girstmair.
\newblock The largest values of dedekind sums.
\newblock {\em International Journal of Number Theory}, 13(6):1579–--1583,
  2017.

\bibitem{Girstmair17}
Kurt Girstmair.
\newblock On the values of {D}edekind sums.
\newblock {\em Journal of Number Theory}, 178:11--18, 2017.

\bibitem{GirstmairNovember2017}
Kurt Girstmair.
\newblock Dedekind sums take each value infinitely many times.
\newblock {\em International Journal of Number Theory}, 14(4):1009--1012, 2018.

\bibitem{Hickerson97}
Dean Hickerson.
\newblock Continued fractions and density results for {D}edekind sums.
\newblock {\em J. Reine Angew. Math}, 290:113--116, 1977.

\bibitem{Myerson98}
Gerald Myerson.
\newblock Dedekind sums and uniform distribution.
\newblock {\em Journal of Number Theory}, 28(3):233--239, 1988.

\bibitem{Rademacher72}
Hans Rademacher and Emil Grosswald.
\newblock {\em Dedekind sums}.
\newblock Mathematical Association of America Washington, DC, 1972.

\bibitem{Urzua07}
Giancarlo Urz\'ua.
\newblock Arrangements of curves and algebraic surfaces.
\newblock {\em arXiv preprint arXiv:0711.0765}, 19(2):335--365, 2010.

\bibitem{Vardi93}
Ilan Vardi.
\newblock Dedekind sums have a limiting distribution.
\newblock {\em International Mathematics Research Notices}, 1993(1):1--12,
  1993.

\end{thebibliography}

\end{document}